\newcommand{\V}{{\mathcal V}}
\newcommand{\pir}{\zeta}
\newcommand{\prs}{\langle\;,\;\rangle}
\newcommand{\prsm}{\langle\;,\;\rangle_{TM}}
\newcommand{\prsa}{\langle\;,\;\rangle_A}
\newcommand{\too}{\longrightarrow}
\newcommand{\om}{\omega}
\newcommand{\esp}{\quad\mbox{and}\quad}
\def\br{[\;,\;]}
\newcommand{\G}{\mathcal{G}}
\newcommand{\g}{\mathfrak{g}}
\newcommand{\Ad}{{\mathrm{Ad}}}
\newcommand{\tr}{{\mathrm{tr}}}
\newcommand{\ric}{{\mathrm{ric}}}
\newcommand{\D}{{\cal D}}
\newcommand{\Om}{\Omega}
\newcommand{\na}{\nabla}
\newcommand{\al}{\alpha}
\newcommand{\be}{\beta}
\newcommand{\ga}{\gamma}
\newcommand{\Ga}{\Gamma}
\newcommand{\e}{\epsilon}
\newcommand{\la}{\lambda}
\newcommand{\De}{\Delta}
\newtheorem{Def}{Definition}[section]
\newtheorem{theo}{Theorem}[section]
\newtheorem{pr}{Proposition}[section]
\newtheorem{Le}{Lemma}[section]
\newtheorem{co}{Corollary}[section]
\newtheorem{remark}{Remark}
\font\bb=msbm10
\def\R{\hbox{\bb R}}
\begin{document}

\begin{frontmatter}


 

\title{ The geometry of generalized Cheeger-Gromoll metrics on the total space of transitive Euclidean  Lie algebroids}

 \author[label1]{Mohamed Boucetta}
 \address[label1]{Universit\'e Cadi-Ayyad\\
 	Facult\'e des sciences et techniques\\
 	BP 549 Marrakech Maroc\\e-mail: m.boucetta@uca.ac.ma}
 \author[label2]{Hasna Essoufi}
 \address[label2]{Universit\'e Cadi-Ayyad\\
 	Facult\'e des sciences et techniques\\
 	BP 549 Marrakech Maroc\\e-mail: essoufi.hasna@gmail.com}
 
 



\begin{abstract} Natural metrics (Sasaki metric, Cheeger-Gromoll metric, Kaluza-Klein metrics etc.. ) on the tangent bundle of a Riemannian manifold is a central topic in Riemannian geometry. Generalized Cheeger-Gromoll metrics is a family of natural metrics  $h_{p,q}$ depending on two parameters with $p\in\R$ and $q\geq0$. This family  has been introduced recently and possesses interesting geometric properties. If $p=q=0$ we recover the Sasaki metric and when $p=q=1$ we recover the classical Cheeger-Gromoll metric. A transitive Euclidean Lie algebroid is a transitive Lie algebroid with an Euclidean product on its total space.
	In this paper, we show that natural metrics can be built in a natural way on the total space of transitive Euclidean Lie algebroids. Then we study the properties of generalized Cheeger-Gromoll metrics on this new context. We show a rigidity result of this metrics which generalizes so far all  rigidity results known in the case of the tangent bundle. We show also that considering natural metrics on the total space of transitive Euclidean Lie algebroids opens new interesting horizons.
	 For instance, Atiyah Lie algebroids constitute an important class of transitive Lie algebroids and we will show that  natural metrics on the total space of Atiyah Euclidean Lie algebroids have interesting properties. In particular, if $M$ is  a Riemannian manifold of dimension $n$, then the Atiyah Lie algebroid  associated to the $\mathrm{O}(n)$-principal bundle of orthonormal frames over $M$ possesses a family depending on a parameter $k>0$ of transitive Euclidean Lie algebroids structures say $AO(M,k)$.    When $M$ is a space form of constant curvature $c$,  we show that there exists two constants $C_n<0$ and $K(n,c)>0$ such that $(AO(M,k),h_{1,1})$ is a Riemannian manifold with positive scalar curvature if and only if $c>C_n$ and $0<k\leq K(n,c)$. 
	
\end{abstract}

\end{frontmatter}

{\it Keywords: Generalized Cheeger-Gromoll metrics, Transitive Lie algebroids, Atiyah Lie algebroids}







\section{Introduction and main results}\label{section1}

Let $(M,\prsm)$ be a Riemannian manifold of dimension $n$, $\pi_E:E\too M$ a vector bundle of rank $r$ endowed with an Euclidean product $\prs_E$ and $\na ^E$ a linear connection on $E$ which preserves $\prs_E$. Denote by $K:TE\too E$ the connection map of $\na^E$ locally given   by
\[ K\left( \sum_{i=1}^n b_i\partial_{x_i}+\sum_{j=1}^rZ_j\partial_{\mu_j}\right)=
\sum_{l=1}^r\left( Z_l+\sum_{i=1}^n\sum_{j=1}^r b_i\mu_j\Ga_{ij}^l\right)e_l, \]where $(x_1,\ldots,x_n)$ is a system of local coordinates, $(e_1,\ldots,e_r)$ is a basis of local sections of $E$,  $(x_i,\mu_j)$ the associated system of coordinates on $E$ and $\na_{\partial_{x_i}}^E e_j=\sum_{l=1}^r\Ga_{ij}^l e_l$. Then
\[ TE=\ker d\pi_E\oplus \ker K. \]
Let $R^{\na^E}(X,Y)=\na^E_{[X,Y]}-\left(\na^E_X\na^E_Y-\na^E_Y\na^E_X\right)$ be the curvature tensor of $\na^E$.

We define on $E$ a family of Riemannian metrics depending on two parameters $q\geq0$ and $p\in\R$ by putting
\[ h_{p,q}(A,B)=\langle d\pi_E(A),d\pi_E(B) \rangle_{TM}+  \frac1{(1+|a|^2)^p}\left( \langle K(A),K(B) \rangle_{E}+q\langle K(A),a\rangle_E\langle K(B),a\rangle_E\right),\quad A,B\in T_aE.  \]
  These metrics, known as generalized Cheeger-Gromoll metrics,  has been introduced and studied in \cite{benyounes2,benyounes}. The original Cheeger-Gromoll metric \cite{cheeger,musso} corresponds to $p=q=1$ and   $h_{0,0}$ is the Sasaki metric \cite{sasaki}. The main property of these metrics is that $\pi_E:E\too M$ is a Riemannian submersion with totally geodesic fibers and its O'Neill shape tensor is entirely determined by the curvature of $\na^E$. Note that generalized Cheeger-Gromoll metrics constitute a subclass of the class of Kaluza-Klein metrics studied in \cite{wood1,wood2} which is a subclass of $g$-metrics called also natural metrics introduced in \cite{sekizawa1} (see also \cite{abassi1,abassi2}). For a historical review of natural metrics on the tangent bundle of a Riemannian manifold one can see \cite{abassi4}.
  
  This paper has two goals. The first one is to complete the study initiated in \cite{benyounes} by giving new results on the rigidity of these metrics. Indeed, 
 in Section \ref{section2} we will prove the following result.
 \begin{theo} \label{main1}The scalar curvature $s^E$ of $(E,h_{p,q})$ is constant if and only if  the curvature $R^{\na^E}$ of $\na^E$ vanishes, $(p,q)\in\{(0,0),(2,0)  \}$ and the scalar curvature $s^M$ of $M$ is constant. Moreover,
 	\begin{enumerate}
 		\item if $(p,q)=(0,0)$, $R^{\na^E}=0$ and $s^M$ is constant then $s^E=s^M\circ\pi_E,$
 		\item if $(p,q)=(2,0)$, $R^{\na^E}=0$ and $s^M$ is constant then $s^E=s^M\circ\pi_E+4r(r-1)$,
 	\end{enumerate}and  in both cases, $E$ is locally the Riemannian product of $M$ and the fiber. 
 	
 \end{theo}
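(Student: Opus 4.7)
The plan is to exploit the structure of a Riemannian submersion with totally geodesic fibers to decompose the scalar curvature of $E$ into three separately computable contributions (the base, the fiber, and the curvature of $\nabla^E$), then to compare them pointwise in each fiber. The introduction recalls that $\pi_E:(E,h_{p,q})\too (M,\prsm)$ is a Riemannian submersion with totally geodesic fibers whose O'Neill $A$-tensor is entirely encoded by $R^{\nabla^E}$, so O'Neill's scalar-curvature identity reduces to
\[ s^E(a)=s^M(\pi_E(a))+s^{\mathrm{fib}}(a)-|A|^2(a),\qquad a\in E. \]
Since $\nabla^E$ is metric, each $R^{\nabla^E}(X,Y)$ is skew for $\langle\cdot,\cdot\rangle_E$, so $\langle R^{\nabla^E}(X,Y)a,a\rangle_E=0$, and writing $A$ in terms of $R^{\nabla^E}$ yields
\[ |A|^2(a)=\frac{Q_x(a)}{4(1+|a|^2)^p},\qquad Q_x(a):=\sum_{i<j}|R^{\nabla^E}(X_i,X_j)a|_E^2, \]
for $(X_i)$ an orthonormal basis of $T_xM$, $x=\pi_E(a)$. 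The form $Q_x$ is a nonnegative quadratic form on $E_x$ that vanishes identically exactly when $R^{\nabla^E}_x=0$.

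Next I would compute $s^{\mathrm{fib}}$: each fiber is rotationally symmetric and in radial coordinates $a=\rho\omega$, $\omega\in S^{r-1}$,
\[ h_{p,q}|_{E_x}=\frac{1+q\rho^2}{(1+\rho^2)^p}\,d\rho^2+\frac{\rho^2}{(1+\rho^2)^p}\,g_{S^{r-1}}, \]
so setting $f(\tau):=\rho(1+\rho^2)^{-p/2}$ with $d\tau=\sqrt{(1+q\rho^2)/(1+\rho^2)^p}\,d\rho$ each fiber becomes the warped product $d\tau^2+f(\tau)^2 g_{S^{r-1}}$ with scalar curvature
\[ s^{\mathrm{fib}}=-2(r-1)\frac{f''}{f}+(r-1)(r-2)\frac{1-(f')^2}{f^2}. \]
A direct computation writes $s^{\mathrm{fib}}$ as an explicit rational function of $\rho^2$ and shows that it is independent of $\rho$ exactly when $(p,q)\in\{(0,0),(2,0)\}$, with values $0$ and $4r(r-1)$ respectively; for $(p,q)=(2,0)$ one recognizes $f(\tau)=\tfrac12\sin(2\tau)$, the round sphere of radius $\tfrac12$. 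In general a third-order expansion of $f$ at $\tau=0$ gives $s^{\mathrm{fib}}(0)=r(r-1)(q+2p)$, a constant independent of $x$.

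Assuming now that $s^E$ is constant, evaluating the O'Neill decomposition at $a=0$ shows that $s^M$ is constant and reduces the problem to the fiberwise identity
\[ s^{\mathrm{fib}}(a)-s^{\mathrm{fib}}(0)=\frac{Q_x(a)}{4(1+|a|^2)^p}\qquad (a\in E_x). \]
The left-hand side depends only on $\rho=|a|$, so taking the Hessian at $a=0$ of both sides forces $Q_x$ to be a scalar multiple of $|\cdot|_E^2$, say $Q_x(a)=\lambda(x)|a|_E^2$. The resulting identity $\phi(\rho^2)-\phi(0)=\lambda(x)\rho^2/(4(1+\rho^2)^p)$ with $\phi:=s^{\mathrm{fib}}$ is independent of $x$, so $\lambda$ is a constant; comparing the rigid right-hand side with the explicit formula for $\phi$ from the previous paragraph, in particular by comparing asymptotics as $\rho\to\infty$ with what the warped-product computation actually produces, rules out $\lambda\neq 0$ for every admissible $(p,q)$. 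Hence $Q_x\equiv 0$, so $R^{\nabla^E}=0$; then $\phi$ is constant, which forces $(p,q)\in\{(0,0),(2,0)\}$ and gives the two announced values of $s^E$.

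Finally, the converse is immediate by substitution, and for the local product structure a metric connection $\nabla^E$ with $R^{\nabla^E}=0$ admits locally a parallel orthonormal frame; in the associated trivialization $E|_U\cong U\times\R^r$ the horizontal distribution coincides with $TU$, is integrable and orthogonal to the fibers, and $h_{p,q}$ becomes the Riemannian product of $(U,\prsm)$ with $(\R^r,h_{p,q}|_{\mathrm{fib}})$. The main obstacle in this plan is the third paragraph: excluding the case $\lambda\neq 0$ requires the fully explicit expression for $\phi(\rho^2)$ from the warped-product computation and a careful comparison of asymptotics in $\rho$ for each value of $p$.
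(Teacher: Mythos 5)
Your strategy is essentially the one the paper follows: O'Neill's identity $s^E=s^M\circ\pi_E+s^{\mathrm{fib}}-|B|^2$ for a Riemannian submersion with totally geodesic fibers, the observation that $\langle R^{\na^E}(X,Y)a,a\rangle_E=0$ (metricity of $\na^E$) turns $|B|^2$ into a nonnegative quadratic form on the fiber divided by $4(1+|a|^2)^p$, evaluation on the zero section to get constancy of $s^M$, and then an analysis of the resulting one--variable identity in $t=|a|^2$. Two of your ingredients differ genuinely from the paper and both are sound: you rederive the fiber scalar curvature by writing each fiber as a warped product $d\tau^2+f(\tau)^2g_{S^{r-1}}$ (your spot checks $f=\tfrac12\sin 2\tau$ for $(p,q)=(2,0)$ and $s^{\mathrm{fib}}(0)=r(r-1)(2p+q)$ are correct), where the paper instead quotes the explicit rational function $f(|a|^2)$ from Benyounes--Loubeau--Wood and proves its Lemma 2.1 by brute-force differentiation; and you use the Hessian at $a=0$ to force $Q_x=\la(x)|\cdot|_E^2$ before reducing to one variable, where the paper simply restricts the identity to a ray $t\mapsto ta$ through a fixed $a$ with $\xi(a,a)\neq0$. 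Your Hessian step buys nothing essential but is a clean way to see that $\la$ is a universal constant.

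The one real gap is the step you flag yourself: excluding $\la\neq0$. It is fillable by exactly the limit comparison you propose, but it genuinely requires a case discussion, and the case $p=1$ is more delicate than ``compare asymptotics'' suggests, because there \emph{both} sides of $\phi(t)-\phi(0)=\la t/(4(1+t)^p)$ have finite limits as $t\to\infty$. Concretely: for $p<1$ the right-hand side tends to $+\infty$ while the left-hand side stays bounded; for $p>1$ the right-hand side tends to $0$ while $\phi(t)$ is unbounded (with a sign depending on $p$, $q$ and $r$, so this sub-case itself needs care); and for $p=1$ one computes $\lim_{t\to\infty}\phi(t)=(r-1)(r-2)$, so the identity forces $\la=4\bigl((r-1)(r-2)-r(r-1)(2+q)\bigr)=-4(r-1)(r+2+rq)<0$, contradicting $\la\geq0$. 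The paper handles $p=1$ differently, by differentiating the identity and evaluating at $t=0$ to obtain $\xi(a,a)=-(r-1)(r+2)(q^2+q+1)<0$, but your route closes the same way. So the proposal is correct in outline and matches the paper's architecture; what is missing is precisely the explicit three-case computation that constitutes the bulk of the paper's proof.
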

 When $E=TM$, $\prsm=\prs_E$ and $\na^E$ is the Levi-Civita connection of $\prsm$ we get the following result which is new and completes the results obtained in  \cite{benyounes}.
 
 \begin{co} \label{main1c}The scalar curvature $s^{TM}$ of $(TM,h_{p,q})$ is constant if and only if one of the following holds:
 	\begin{enumerate}
 		\item  $(p,q)=(0,0)$, $R^M=0$ and in this case $(TM,h_{p,q})$ is flat.
 		
 		\item  $(p,q)=(2,0)$, $R^M=0$ and $s^{TM}=4n(n-1)>0$.
 	\end{enumerate}
 	
 \end{co}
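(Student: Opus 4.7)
The plan is to derive Corollary \ref{main1c} as a direct specialization of Theorem \ref{main1}. We apply the theorem with $E=TM$, $\prs_E=\prsm$, and $\na^E$ the Levi-Civita connection $\na^M$ of $\prsm$; in this setting $r=n$ and $R^{\na^E}=R^M$. By Theorem \ref{main1}, the scalar curvature $s^{TM}$ is constant if and only if $R^M=0$, $(p,q)\in\{(0,0),(2,0)\}$, and $s^M$ is constant. But $R^M=0$ already forces $s^M\equiv0$, so the condition on $s^M$ is automatic.

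I would then handle the two remaining cases separately, using the product structure supplied by the theorem: $TM$ is locally isometric to the Riemannian product of $M$ with the typical fiber equipped with the metric induced by $h_{p,q}$.

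\textbf{Case $(p,q)=(0,0)$.} Here $h_{0,0}$ is the Sasaki metric, so the induced fiber metric at a point $x\in M$ is simply $\prs_{T_xM}$, i.e.\ the flat Euclidean metric on $\R^n$. Combined with the flatness of $M$ (from $R^M=0$) and the local product structure, this forces $(TM,h_{0,0})$ to be flat. In particular, $s^{TM}=0=s^M\circ\pi_E$, which matches part (1) of Theorem \ref{main1}.

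\textbf{Case $(p,q)=(2,0)$.} By part (2) of Theorem \ref{main1}, $s^{TM}=s^M\circ\pi_E+4r(r-1)=4n(n-1)$, which is strictly positive as soon as $n\geq 2$. One can also verify consistency by a direct computation: the induced fiber metric $\frac{1}{(1+|a|^2)^2}\prs_E$ on $\R^n$ is (up to a constant conformal factor) the pullback of the round metric under stereographic projection, hence has constant sectional curvature $4$ and thus scalar curvature $4n(n-1)$; adding $s^M=0$ reproduces the formula.

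The only substantive content here is Theorem \ref{main1}; the corollary itself is essentially a restatement in the tangent bundle setting, and the main point I would take care with is observing that the hypothesis ``$s^M$ constant'' from the theorem is automatic once $R^M=0$, so it need not appear in the corollary. No significant obstacle is expected.
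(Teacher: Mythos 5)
Your proposal is correct and matches the paper's treatment: the corollary is obtained exactly as you describe, by specializing Theorem \ref{main1} to $E=TM$ with the Levi-Civita connection (so $R^{\na^E}=R^M$ and the constancy of $s^M$ is automatic from $R^M=0$), the flatness in case $(0,0)$ and the value $4n(n-1)$ in case $(2,0)$ following from the local product structure and parts (1)--(2) of the theorem. The paper states the corollary without a separate proof, treating it as this immediate specialization.
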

 
 Note that all the classical rigidity results of the Sasaki metric can be derived from Corollary \ref{main1c}. On the other hand,  $R^{\na^E}=0$ if and only if the O'Neill shape tensor of the Riemannian submersion $\pi:(E,h_{p,q})\too (M,\prsm)$ vanishes which is equivalent to $E$ being locally the Riemannian product of $M$ and the fiber. So,  Theorem \ref{main1} can be stated a follows: $(E,h_{p,q})$ has constant scalar curvature if and only if $E$ is locally the Riemannian product of $M$ and the fiber, $M$ has constant scalar curvature and the fiber has constant scalar curvature. We will show in Lemma \ref{le1} that the restriction of $h_{p,q}$ to a fiber has constant scalar curvature if and only if $(p,q)\in\{(0,0),(2,0)  \}$. In conclusion, Theorem \ref{main1} is a strong rigidity result since it cuts all hope of building interesting examples of locally symmetric spaces, Einstein manifolds and so on, by using generalized Cheeger-Gromoll metrics on $E$.

  To our knowledge, even if one can define natural metrics in the general sitting of an Euclidean bundle over a Riemannian manifold, only the case of the tangent bundle has been considered so far except in \cite{benyounes2} where harmonic sections of Euclidean bundles have been considered. The reason is the difficulty of finding interesting examples. It is easy to build an Euclidean bundle $E\too M$ over a Riemannian manifold but it is more difficult to find a connexion on $E$ which preserves the Euclidean product and it is far more difficult to find one which has some link to the geometry of the Riemannian manifold $M$. Our second goal in this paper is to remedy  this situation and introduce a large class of Euclidean bundles where natural metrics can be defined and have interesting properties.
   Indeed, the first author has introduced Riemannian Lie algebroids in \cite{boucetta} (we use in this paper the terminology Euclidean instead of Riemannian)  and has  shown that the analogous of Sasaki metric  can be build on the total space $A$ of a Riemannian transitive Lie algebroid. More precisely, it has been shown that $TA$ splits into a vertical part and a horizontal one and it is what one needs to build natural metrics. The construction of this splitting in \cite{boucetta} is based on the properties of connections in the context of Lie algebroids. But, when we started studying generalized Gromoll-Cheeger metrics on transitive Riemannian Lie algebroids, we noticed that they constitute a particular case 
  of generalized Gromoll-Cheeger metrics on Euclidean vector bundles  introduced above.
 Let us give more details on this now. 
 
 Let $\pi_A:A\too M$ be a vector bundle endowed with an Euclidean product $\prsa$. Suppose that $A$ carries a structure of transitive Lie algebroid, i.e.,  a surjective bundle homomorphism $\rho:A\too TM$, a structure of real Lie algebra  $\br_A$ on $\Ga(A)$ such that
 \[ [a,fb]_A=f[a,b]_A+\rho(a)(f)b,\quad a,b\in\Ga(A),f\in C^\infty(M). \]
 We call $(A,\prsa,\rho,[\;,\;]_A)$ a transitive Euclidean Lie algebroid. If $\G=\ker\rho$ then $\pi_\G:\G\too M$ is a  Lie algebroid with vanishing anchor called the adjoint Lie algebroid of $A$ and we have an exact sequence of Lie algebroids called {\it Atiyah sequence}
 \begin{equation}\label{eq20}0\too\G\too A\stackrel{\rho}\too TM\too0.\end{equation}
 There are two important objects naturally associated to  $(A,M,\rho,\prsa)$.
 \begin{enumerate}
 	\item The analogous of the Levi-Civita
 	connection. Indeed,  the Koszul formula
 	\begin{eqnarray}
 	2\langle\D_ab,c\rangle_A&=&{\rho}(a).\langle b,c\rangle_A+{\rho}(b).\langle a,c\rangle_A-
 	{\rho}(c).\langle a,b\rangle_A\label{koszulb}\\
 	&&	+\langle[c,a]_A,b\rangle_A+\langle[c,b]_A,a\rangle_A+\langle[a,b]_A,c\rangle_A,\quad a,b,c\in\Ga(A)\nonumber\end{eqnarray}defines
 	a linear $A$-connection  which is characterized by the fact that
 	$\D$ is metric, i.e., $\rho(a).\langle b,c\rangle_A=\langle\D_ab,c\rangle_A+\langle b,\D_ac\rangle_A$
 	and $\D$ is torsion free, i.e., $\D_ab-\D_ba=[a,b]_A.$\\
 	The connection $\D$ is well-known as the \emph{ Levi-Civita $A$-connection} associated to
 	the Riemannian metric $\prsa$. The reader can consult \cite{boucetta, fernandes} for a detailed study of connections on Lie algebroids.
 	
 	\item A splitting of the Atiyah sequence of $A$.  Indeed, For any $x\in M$, we denote by $\G_x^\perp$ the orthogonal of $\G_x$ with respect to $\prsa$ thus $$A=\G\oplus\G^\perp.$$ The restriction of 
 	$\rho$ to $\G^{\perp}$ is an isomorphism onto $TM$ and its inverse $\ga:TM\too \G^\perp$ defines a splitting of the Atiyah sequence.
 \end{enumerate}

 From these two objects one can extract  the  necessary ingredients for defining natural metrics and, in particular, generalized Cheeger-Gromoll metrics on $A$. Indeed,  we have a Riemannian metric on $M$ and a connection $\na^A$ on $A$ given by
 \[ \langle X,Y\rangle_{TM}=\langle \ga(X),\ga(Y)\rangle_A\esp \na^A_Xa=\D_{\ga(X)}a,\quad  X,Y\in\Ga(TM), a\in\Ga(A),  \]and since $\D$ is metric, $\na^A$ preserves $\prsa$.  The curvature of $\na^A$ plays an important role in the study of the geometry of generalized Cheeger-Gromoll metrics on $A$. It depends on the Lie algebroid structure and on the metric $\prsa$.  So we call it  {\it  principal curvature} of the transitive Euclidean Lie algebroid $A$.

 There are many reasons why generalized Cheeger-Gromoll metrics on transitive Euclidean Lie algebroids are interesting:
 \begin{enumerate}
 	\item They generalize naturally generalized Cheeger-Gromoll metrics on the tangent space of a Riemannian manifold. The tangent space of a Riemannian manifold has a natural structure of transitive Euclidean Lie algebroid.
 	\item  When a transitive Euclidean Lie algebroid $A$ is endowed with a generalized Cheeger-Gromoll metric $h_{p,q}$, the O'Neill shape tensor of the Riemannian submersion $\pi_A:(A,h_{p,q})\too (M,\prsm)$ is encoded in the  principal curvature of $A$ and can be computed explicitly (see Proposition \ref{fb}). It involves the curvature of $M$ and on the Lie algebroid structure. So the geometry of $(A,h_{p,q})$ is deeply linked to the geometry of $(M,\prsm)$ and the Lie algebroid structure as one can see in Proposition \ref{fb1} where we show that the vanishing of the principal curvature has drastic consequences on $(M,\prsm)$ and the Lie algebroid.
 	\item There is a large class of transitive Lie algebroids, namely, Atiyah Lie algebroids associated to principal bundles (see \cite{kubarski}). Euclidean Atiyah Lie algebroid turn out to be interesting and we devote Section \ref{section4} to give a precise description of them.
	\item To any Riemannian manifold $(M,\prsm)$ we can associate canonically a transitive Lie algebroid.  Indeed, the $O(n)$-principal bundle of orthonormal frames over $M$   has an associated Atiyah Lie algebroid.  We will show in Section \ref{section4} that this Lie algebroid can be identified with $TM\oplus\mathrm{so}(TM)$ where $\mathrm{so}(TM)=\bigcup_{x\in M}\mathrm{so}(T_xM)$ and $\mathrm{so}(T_xM)$ is the Lie algebra of skew-symmetric endomorphisms of $T_xM$.  The Lie bracket $\br_A$ and the anchor $\rho$  are given by
	\begin{eqnarray*} [X+F,Y+G]_A&=&[X,Y]+\left\{ \na_X^M(G)-\na_Y^M(F)+[F,G]-R^M(X,Y)\right\},\;\rho(X+F)=X,\\&&\quad X,Y\in\Ga(TM), F,G\in\Ga(\mathrm{so}(TM)) ,  \end{eqnarray*}where $\na^M$ is the Levi-Civita connection of $M$ and $R^M$ is its curvature. Moreover, this Lie algebroid can be endowed with a family of Euclidean products $\prs_k$ given by
	\[ \langle X+F,Y+G\rangle_k= 
	\langle X,Y\rangle_{TM}       -k\tr(F\circ G). \]
	We denote by $AO(M,k)$ the Lie algebroid $TM\oplus\mathrm{so}(TM)$ endowed with the Euclidean product $\prs_k$.
	
	Our second main result can be compared to the main result obtained in \cite{gud}. We recall this result in order to give  the reader a possibility of comparing it to ours.
	
 \begin{theo}[\cite{gud}]\label{gud} Let $(M,\prsm)$ be a Riemannian manifold of dimension $n$ and of constant sectional curvature $c$. Then
 	\begin{enumerate}
 		\item If $n=2$ then there exists a constant $C_2\geq40$ such that $(TM,h_{1,1})$ has positive scalar curvature if and only if $c\in(0,C_2)$.
 		\item If $n>2$ then there exists two constants $C_n\geq60$ and $c_n<0$ such that $(TM,h_{1,1})$ has positive scalar curvature if and only if $c\in(c_n,C_n)$.
 	\end{enumerate}
 	
 \end{theo}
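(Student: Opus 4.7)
The plan is to reduce the theorem to analyzing when an explicit one-variable rational function stays positive. Since $\pi:(TM,h_{1,1})\to(M,\prsm)$ is a Riemannian submersion with totally geodesic fibers whose O'Neill integrability tensor $A$ is entirely determined by $R^M$, O'Neill's formula expresses the scalar curvature of $(TM,h_{1,1})$ in terms of three pieces: the pullback of $s^M$, the intrinsic scalar curvature $s^{\mathrm{fib}}$ of the fiber with the induced metric, and a correction built from $|A|^2$. This is essentially the decomposition underlying Theorem \ref{main1}, specialized to $p=q=1$.

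Specializing to a space form, $s^M\equiv n(n-1)c$ and $R^M(X,Y)Z=c\left(\langle Y,Z\rangle_{TM} X-\langle X,Z\rangle_{TM} Y\right)$. Two simplifications follow. First, $h_{1,1}|_{T_xM}$ is $\mathrm{O}(n)$-invariant and rotationally symmetric, so $s^{\mathrm{fib}}(a)$ is a universal rational function $\varphi_n(r)$ of $r=|a|^2$ alone. Second, $R^M$ is $c$ times a universal algebraic tensor, so the O'Neill correction factors as $c^2\psi_n(r)$ with $\psi_n\geq 0$ rational. Combining,
$$s^{TM}(a)\;=\;Q_r(c)\;:=\;-\psi_n(r)\,c^2+n(n-1)c+\varphi_n(r).$$
Positive scalar curvature everywhere on $TM$ is therefore equivalent to $Q_r(c)>0$ for every $r\geq 0$. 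For fixed $r$, $Q_r$ is a concave quadratic in $c$ with roots $c^-(r)\leq c^+(r)$, and the admissible set of $c$ is the intersection $\bigcap_{r\geq 0}(c^-(r),c^+(r))$, which by continuity is an open interval $(c_n,C_n)$ with
$$c_n=\sup_{r\geq 0}c^-(r),\qquad C_n=\inf_{r\geq 0}c^+(r).$$

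The splitting of cases traces back to the behavior of $\varphi_n$ at $r=0$. The classical Cheeger-Gromoll metric on $\mathbb{R}^n$ has strictly positive scalar curvature throughout when $n\geq 3$, but $\varphi_2(0)=0$ and $\varphi_2$ fails to remain positive. Strict positivity of $\varphi_n(0)$ for $n\geq 3$ implies $Q_0(0)=\varphi_n(0)>0$, so there is a neighborhood of $0$ of admissible $c$ and hence $c_n<0$. The vanishing $\varphi_2(0)=0$ forces $Q_0(c)\leq 0$ for all $c\leq 0$, pinning $c_2=0$ and yielding the one-sided interval $(0,C_2)$.

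The main obstacle is the sharp global analysis of the intersection $\bigcap_{r\geq 0}(c^-(r),c^+(r))$. One must locate the value of $r$ that minimizes $c^+(r)$, and (for $n\geq 3$) that which maximizes $c^-(r)$, verifying that the extremum is attained and lies in the interior rather than at $r=0$ or $r\to\infty$; asymptotics of $\varphi_n,\psi_n$ at infinity need to be controlled to rule out the latter. Extracting the explicit numerical lower bounds $C_n\geq 60$ and $C_2\geq 40$ then reduces to evaluating $Q_r$ at well-chosen points and using rational-function estimates on $\varphi_n,\psi_n$. This is a delicate but essentially single-variable calculus exercise, and is where all the real work of the theorem is concentrated.
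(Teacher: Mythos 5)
You should first note that the paper does not actually prove this statement: it is quoted from \cite{gud} only for comparison with Theorem \ref{main5}, so the relevant benchmark is the analogous computation the paper performs for $AO(M,k)$ in Section \ref{section5}. Your general reduction is correct and is exactly that method: by the proof of Theorem \ref{main1}, for a space form one gets
\begin{equation*}
s^{TM}(a)\;=\;n(n-1)c+f(t)-\frac{(n-1)c^{2}t}{2(1+t)},\qquad t=|a|^{2},
\end{equation*}
with $f$ the fiber scalar curvature of Proposition \ref{f}, and one analyzes positivity of this rational function of $t$ as a concave quadratic condition on $c$.

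The gap is in the one substantive claim you make beyond the setup: your mechanism for the $n=2$ versus $n>2$ dichotomy is wrong. By Proposition \ref{f}(iii) with $p=q=1$ and $r=n$, the fiber scalar curvature at the origin is $f(0)=r(r-1)(2p+q)=3n(n-1)$, which equals $6$ for $n=2$; moreover $f(t)=6/(1+t)^{2}>0$ for all $t$ when $n=2$. So both ``$\varphi_{2}(0)=0$'' and ``$\varphi_{2}$ fails to remain positive'' are false, and the zero section imposes no constraint forcing $c>0$. The actual obstruction sits at the opposite end, $t\to\infty$: setting $\alpha=1+t$ and clearing denominators (as the paper does in the proof of Theorem \ref{main5}), positivity for all $t\ge0$ is equivalent to positivity on $[1,\infty)$ of a quadratic in $\alpha$ whose lower-order coefficients are automatically positive, hence to the nonnegativity of its leading coefficient $nc+(n-2)-\tfrac{c^{2}}{2}$. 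The dichotomy comes from $\lim_{t\to\infty}f(t)=(n-1)(n-2)$ vanishing precisely when $n=2$, which forces $c\ge0$ there, while for $n\ge3$ it is positive and admits an interval of negative $c$. So the extremum you propose to locate in the interior of $(0,\infty)$ in fact occurs in the limit $t\to\infty$, and the ``delicate single-variable calculus'' you defer collapses to reading off one coefficient; but aimed as it is at $t=0$, your argument would miss the obstruction entirely. (Carrying the computation out with the paper's normalization gives $C_2=4$ and $C_n=n+\sqrt{n^{2}+2n-4}$, so the bounds $C_2\ge40$, $C_n\ge60$ in the quoted statement must reflect a different normalization in \cite{gud}; in any case they cannot be obtained by evaluating your $Q_r$ at well-chosen points.)
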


  We can state now our second main result and one can see that the conditions on the curvature in our result are far less restrictive than those in Theorem \ref{gud}.
 
 \begin{theo}\label{main5} Let $(M,\prsm)$ be a Riemannian manifold of dimension $n$ with constant sectional curvature $c$. Then
 	\begin{enumerate}
 		\item[$(i)$] If $c=0$ then for any $k>0$, $(AO(M,k),h_{1,1})$ has positive scalar curvature.
 		\item[$(ii)$] If $c\not=0$ and $n=2$ then $(AO(M,k),h_{1,1})$ has positive scalar curvature if and only if $c>2(1-\sqrt{2})\simeq-0,82$ and $0<k\leq \frac{2(c+2\sqrt{1+c})}{c^2}$.
 		\item[$(iii)$] If $c\not=0$ and $n\geq3$ then $(AO(M,k),h_{1,1})$ has positive scalar curvature if and only if
 		\[ c>\frac{2(a-\sqrt{a^2+bd})}{d}=C_n\esp 0<k\leq \frac{2(cd+2\sqrt{d}\sqrt{b+ac})}{dc^2}=K(n,c), \]
 		where $a=n(n-1)$, $b=(r-1)(r-2)$,  $d=4(n-2)$ and $r=\frac{n(n+1)}2$.
 		Moreover, $C_n<0$, $K(n,c)>0$ and, for instance,
 		\[ C_3\simeq -2,3,  C_4\simeq -3,7, C_5\simeq -5,1, C_6\simeq-6,6, C_{20}\simeq -39,7.\]
 	\end{enumerate}
 	
 \end{theo}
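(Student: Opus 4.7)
The plan is to combine the general scalar-curvature formula for $(A,h_{p,q})$ developed in Section \ref{section2} (in particular Proposition \ref{fb}, which expresses the O'Neill tensor of the submersion $\pi_A\colon(A,h_{p,q})\too(M,\prsm)$ in terms of the principal curvature of $A$) with the explicit form of the principal curvature of $AO(M,k)$ when $(M,\prsm)$ has constant sectional curvature $c$. Throughout, one exploits the $\mathrm{O}(n)$-invariance of the construction in the fiber directions together with the rotational symmetry of the space form to reduce the dependence of $s^{AO(M,k)}(a)$ on $a\in A_x$ to the single scalar $t=|a|^2_k$.

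First I would compute the principal curvature of $AO(M,k)$. Writing elements of $A=TM\oplus\mathrm{so}(TM)$ as $(X,F)$ and using the bracket $[X+F,Y+G]_A=[X,Y]+\na^M_XG-\na^M_YF+[F,G]-R^M(X,Y)$ together with the Koszul formula \eqref{koszulb}, the Levi-Civita $A$-connection $\D$ is obtained explicitly. Because $\ga\colon TM\too\G^\perp$ is simply $X\mapsto(X,0)$, the connection $\na^A_X a=\D_{\ga(X)}a$ equals $\na^M_X$ on the $TM$-component plus algebraic corrections involving $R^M$ and $k$. When $R^M(X,Y)Z=c(\langle Y,Z\rangle_{TM}X-\langle X,Z\rangle_{TM}Y)$ these corrections are purely algebraic, and $R^{\na^A}(X,Y)$ becomes an explicit skew endomorphism of $A$ whose coefficients are polynomials in $c$ and $k^{-1}$.

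Next I would feed $R^{\na^A}$ into the Cheeger-Gromoll scalar-curvature formula of Section \ref{section2} and collect the contributions. Three natural pieces appear: the horizontal base curvature $s^M=n(n-1)c$; the intrinsic scalar curvature of an $r$-dimensional Euclidean fiber with the induced $h_{1,1}$ metric, in which the rank $r=n(n+1)/2$ and the combinatorial constant $b=(r-1)(r-2)$ arise naturally from Lemma \ref{le1}; and the O'Neill correction, in which the trace identities for constant sectional curvature produce $a=n(n-1)$ and $d=4(n-2)$. After simplification one obtains $s^{AO(M,k)}(a)=P(t)/(1+t)^{\al}$ with $P$ a polynomial whose coefficients are explicit polynomials in $n,c,k$. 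The sign analysis then runs as follows: case $(i)$ with $c=0$ is immediate since $P(t)$ reduces to a strictly positive combination of fiber-curvature terms; for $c\neq 0$ the infimum of $s^{AO(M,k)}$ is attained either at $t=0$, at $t\to\infty$, or at an interior critical point, and requiring the infimum to be positive reduces to a quadratic inequality in $k$ whose discriminant is proportional to $b+ac$. Non-negativity of this discriminant forces $c>C_n=\frac{2(a-\sqrt{a^2+bd})}{d}$ (which is negative because $bd>0$), and the smaller positive root of the quadratic gives $k\leq K(n,c)=\frac{2(cd+2\sqrt{d}\sqrt{b+ac})}{dc^2}$, producing case $(iii)$. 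The dimension $n=2$ requires separate treatment since $d=4(n-2)=0$ degenerates the quadratic, and a direct one-variable minimization then yields the closed form $c>2(1-\sqrt{2})$, $k\leq\frac{2(c+2\sqrt{1+c})}{c^2}$ announced in case $(ii)$.

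The main obstacle will lie in the second step: one must simultaneously track the bracket $[F,G]$ in $\mathrm{so}(TM)$, the curvature correction $-R^M(X,Y)$ hidden inside the Lie algebroid bracket, and the scaling by $k$ built into $\prs_k$, and organize all three contributions so that the general scalar-curvature formula collapses to a single rational function of $t$ whose coefficients match the combinatorial constants $a,b,d$. Once that formula is in hand, the sign analysis is elementary calculus followed by the quadratic formula, and the numerical values $C_3\simeq-2.3,\ldots,C_{20}\simeq-39.7$ in the statement are obtained by direct substitution.
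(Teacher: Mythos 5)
Your overall strategy coincides with the paper's: compute the principal curvature of $AO(M,k)$ in the constant-curvature case, use the O'Neill decomposition $s^A=s^M+s^v-|B|^2$ with $s^v$ taken from Proposition \ref{f}, and reduce positivity to a quadratic inequality in $k$ whose discriminant involves $b+ac$. But there is a genuine gap in your second step: the scalar curvature does \emph{not} reduce to a function of the single scalar $t=|a|^2$. The $\mathrm{O}(n)$-symmetry preserves the splitting $A_x=T_xM\oplus\mathrm{so}(T_xM)$, hence preserves $|Z|^2$ and $|F|^2$ separately, and the O'Neill term actually comes out as $|B|^2(Z+F)=2\varpi^{2}\om^{p}\bigl((n-1)|Z|^2+2(n-2)|F|^2\bigr)$ with $\varpi=\frac14c(2-ck)$; since $(n-1)\neq 2(n-2)$ except for $n=3$, this is not a function of $t$ alone. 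Your proposed one-variable minimization of $P(t)/(1+t)^{\al}$ therefore does not apply, and the "infimum at $t=0$, $t\to\infty$, or an interior critical point" analysis cannot by itself deliver the stated necessary and sufficient condition. The paper's key device, which you are missing, is to eliminate one of the two fiber invariants via $|Z|^2+|F|^2=\al-1$ and to observe that the leftover dependence enters with a manifestly non-negative coefficient ($2\varpi^{2}\al|F|^2$ for $n=2$, $2(n-3)\varpi^{2}\al|Z|^2$ for $n\geq3$), so that positivity of $s^A$ on the whole fiber is equivalent to non-negativity of the coefficient of $\al^2$ alone. This is also why $n=2$ is genuinely different: it is not that the quadratic in $k$ "degenerates" when $d=0$ (that would give the wrong threshold $c\geq-1$), but that the elimination must be done in the other variable, producing the leading coefficient $2(c+1-\varpi^{2})$ rather than the $n\geq3$ expression.

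A second, smaller issue: you ask for the infimum of the scalar curvature to be \emph{positive}, but the correct condition is non-negativity of the leading coefficient, with equality allowed; when that coefficient vanishes one still has $\al^2 s^A$ equal to a sum of strictly positive lower-order terms, so $s^A>0$ everywhere even though $\inf s^A=0$ is not attained. This is precisely why the bound on $k$ in the statement is $k\leq K(n,c)$ rather than $k<K(n,c)$. Your computation of the principal curvature via the Koszul formula for $\D$ is a legitimate alternative to the paper's route through the tensors $H$ and $T$ of Theorem \ref{main6}, and the rest of your outline (discriminant analysis, sign of $C_n$, numerical values) matches the paper once the two-variable structure is handled correctly.
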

 
\end{enumerate}
 Finally, this work opens new horizons, namely, it gives the basis of further study of all kind of natural metrics (studied on the tangent bundle of a Riemannian manifolds) on the total spaces of  transitive Euclidean Lie algebroids.
 
 The paper is organized as follows. In Section \ref{section2}, we give the main properties of generalized Cheeger-Gromoll metrics on the total space of an Euclidean vector bundle,  we prove Theorem \ref{main1} and we derive some of its corollaries.  In Section \ref{section3}, we give a complete description of transitive Euclidean Lie algebroids (see Theorem \ref{main6}),  we compute their principal curvature and we give the geometrical consequences of its vanishing. Section \ref{section4} is devoted to the characterization of Atiyah Euclidean Lie algebroids (see Corollary \ref{atiyah}).  In Section \ref{section5}, we prove Theorem \ref{main5}.

 \section{  Generalized Cheeger-Gromoll metrics on the total space of Euclidean vector bundles and their  rigidity }\label{section2}
 
  Generalized Cheeger-Gromoll metrics on the tangent space of a Riemannian manifold  were introduced and studied in \cite{benyounes}. In this section, we consider a more general sitting, namely, generalized Cheeger-Gromoll metrics on the total space of Euclidean vector bundles over a Riemannian manifold. We will show that these metrics are rigid 
    recovering some classical results and establishing other ones  which are new even in the classical case of generalized Cheeger-Gromoll metrics on the tangent bundle.

 \subsection{Definitions and immediate properties}\label{subsection21} 
  
  Let $(M,\prsm)$ be a $n$-dimensional Riemannian manifold and $\pi_E:E\too M$ a vector bundle  of rank $r$ endowed with an Euclidean product $\prs_E$. 
  We suppose that there exists a linear connection $\na^E$ on $E$ for which $\prs_E$ is parallel. We denote by $(x,a)$ an element of $E_x$. For any $(x,a)\in E$  there exists an injective linear map $h^{(x,a)}:T_xM\too T_{(x,a)}E$ given in a coordinates system $(x_i,\be_j)$ associated to a local trivialization $(s_1,\ldots,s_r)$ of $E$ around $x$ by
  \begin{equation}\label{h} h^{(x,a)}(x,u)=\sum_{i=1}^nu_i\partial_{x_i}-\sum_{k=1}^r\left(\sum_{i=1}^n\sum_{j=1}^r u_i\be_j\Ga_{ij}^k\right)\partial_{\be_k},\end{equation}where
  $$u=\sum_{i=1}^nu_i\partial_{x_i},\; \na^E_{\partial_{x_i}}s_j=\sum_{k=1}^r\Ga_{ij}^ks_k\esp a=\sum_{i=1}^r\be_is_i.$$ Moreover,  if $ \mathcal{H}_{(x,a)}E$ denotes the image of $h^{(x,a)}$ then   
  \begin{equation}\label{eq21} TE=\V E\oplus \mathcal{H} E,\end{equation}where $\V E=\ker d\pi_E$.
  
  For any $\al\in\Ga(E)$ and for any $X\in\Ga(TM)$, we denote by $\al^v\in\Ga(TE)$ and $X^h\in\Ga(TE)$ the  vertical and horizontal vector field associated to $\al$ and $X$. The flow of $\al^v$ is given by $\Phi^\al(t,(x,a))=a+t\al(x)$ and $X^h$ is given by $X^h(x,a)=h^{(x,a)}(X(x))$.  To prove the following proposition one can mimic the well-known proof in the case where $E=TM$, $\prs_E=\prsm$ and $\na^E$ is the Levi-Civita connection of $\prsm$.
  
  \begin{pr}\label{prb} For any $X,Y\in \Ga(TM),\al,\be\in\Ga(E)$,
  	\[ [\al^v,\be^v]=0,\;[X^h,\al^v]=(\na^E_{X}\al)^v\esp [X^h,Y^h]((x,a))=[X,Y]^h((x,a))+(R^{\na^E}(X,Y)a)^v, \]where $R^{\na^E}$ is the curvature of ${\na^E}$ given by $R^{\na^E}(X,Y)=\na^E_{[X,Y]}-\left(\na^E_X\na^E_Y-\na^E_Y\na^E_X\right)$.
  	
  \end{pr}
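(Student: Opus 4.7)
The plan is to verify all three identities by direct computation in local coordinates $(x_i,\beta_j)$ furnished by a trivialization $(s_1,\ldots,s_r)$ of $E$, using the explicit formula (\ref{h}) for the horizontal lift. Writing $X=\sum_i X_i\partial_{x_i}$, $Y=\sum_i Y_i\partial_{x_i}$, $\alpha=\sum_j a_j s_j$, $\beta=\sum_j b_j s_j$, the vertical lifts take the form $\alpha^v=\sum_j a_j\partial_{\beta_j}$, $\beta^v=\sum_j b_j\partial_{\beta_j}$, while (\ref{h}) gives
$$X^h=\sum_i X_i\,\partial_{x_i}-\sum_k\Bigl(\sum_{i,j} X_i\,\beta_j\,\Ga_{ij}^k\Bigr)\partial_{\beta_k},$$
and similarly for $Y^h$. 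The first identity is then immediate: both $\alpha^v$ and $\beta^v$ have no $\partial_{x_i}$-component and their $\partial_{\beta_j}$-coefficients depend only on $x$, so $[\alpha^v,\beta^v]=0$.

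For the second identity, since $\alpha^v$ is purely vertical and the $\partial_{x_i}$-coefficients of $X^h$ depend only on $x$, the $\partial_{x_i}$-components of $[X^h,\alpha^v]$ vanish; a short calculation gives its $\partial_{\beta_k}$-component as $X(a_k)+\sum_{i,j}X_i\,a_j\,\Ga_{ij}^k$, which is exactly the $s_k$-coefficient of $\na^E_X\alpha$, whence $[X^h,\alpha^v]=(\na^E_X\alpha)^v$.

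For the third identity, $X^h$ and $Y^h$ are $\pi_E$-related to $X$ and $Y$, so $d\pi_E[X^h,Y^h]=[X,Y]$, and the $\partial_{x_i}$-components of $[X^h,Y^h]$ coincide with those of $[X,Y]^h$. To handle the vertical part, expand $[X^h,Y^h]^{\beta_k}$: the terms free of derivatives or products of Christoffel symbols reassemble into $-\sum_{l,m}[X,Y]_l\,\beta_m\,\Ga_{lm}^k$, which is precisely the $\partial_{\beta_k}$-component of $[X,Y]^h$, while the remaining terms antisymmetrize in $X,Y$ to yield
$$\sum_{i,j,m}X_iY_j\,\beta_m\Bigl(\partial_{x_j}\Ga_{im}^k-\partial_{x_i}\Ga_{jm}^k+\sum_n\bigl(\Ga_{im}^n\Ga_{jn}^k-\Ga_{jm}^n\Ga_{in}^k\bigr)\Bigr),$$
which is exactly the $s_k$-coefficient of $R^{\na^E}(X,Y)a$ under the sign convention $R^{\na^E}(X,Y)=\na^E_{[X,Y]}-(\na^E_X\na^E_Y-\na^E_Y\na^E_X)$ stated in the proposition. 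This gives $[X^h,Y^h]((x,a))=[X,Y]^h((x,a))+(R^{\na^E}(X,Y)a)^v$.

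The main technical obstacle is the bookkeeping of the double sums of Christoffel symbols in the third identity: one must carefully track which indices are being summed and confirm that the surviving terms assemble into the curvature tensor rather than into some other apparently similar combination. All three identities being tensorial in $X,Y,\alpha,\beta$, verifying them in one trivialization suffices.
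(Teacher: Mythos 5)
Your proof is correct: the coordinate formulas for $\al^v$, $X^h$, and the bracket computations all check out, and the surviving terms in the vertical part of $[X^h,Y^h]$ do assemble into the $s_k$-coefficient of $R^{\na^E}(X,Y)a$ under the paper's sign convention $R^{\na^E}(X,Y)=\na^E_{[X,Y]}-(\na^E_X\na^E_Y-\na^E_Y\na^E_X)$. The paper omits the proof entirely, referring the reader to the well-known tangent-bundle case; your local-trivialization computation is precisely that classical argument transplanted to a general Euclidean bundle, so it is essentially the same approach.
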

  
  The generalized Cheeger-Gromoll metrics is a family of Riemannian metrics on $E$ depending on two parameters $p\in\R$ and $q\geq0$ and given by
  \begin{eqnarray}
  	h_{p,q}(X^h,Y^h)&=& \langle X,Y\rangle_{TM}\circ\pi_E  ,\; h_{p,q}(X^h,\al^v)=0,\; \al,\be\in\Ga(E),\; X,Y\in\Ga(TM),\nonumber\\ 
  	h_{p,q}(\al^v,\be^v)((x,a))&=&(1+|a|^2)^{-p}\left(\langle\al,\be\rangle_E+q\langle\al,a\rangle_E\langle\be,a\rangle_E\right).\label{cg}
  \end{eqnarray}
  We will denote by $\om_q(a)=\frac1{1+q|a|^2}$ with $\om_1=\om$.
  
  Note that $h_{0,0}$ is the Sasaki metric, $h_{1,1}$ is the classical Cheeger-Gromoll metric and $h_{2,0}$ is the stereographic metric.
  
  To compute the Riemannian invariants of $(E,h_{p,q})$ (Levi-Civita connection and the different curvatures), we will use the following facts:
  \begin{enumerate}
  	\item[$(i)$] The projection $\pi_E:(E,h_{p,q})\too (M,\prsm)$ is a Riemannian submersion with totally geodesic fibers and hence the different Riemannian invariants can be computed by using O'Neill formulas (see \cite[chap. 9]{bes}). Here the  O'Neill shape tensor, say $B$, is given by the expression of $[X^h,Y^h]$. So, by virtue of Proposition \ref{prb}, we get
  	\begin{equation}\label{b}
  	B_{X^h}Y^h((x,a))=\frac12\V[X^h,Y^h]=\frac12(R^{\na^E}(X,Y)a)^v.
  	\end{equation}
  	\item[$(ii)$] O'Neill's formulas involve the Riemannian invariants of $(M,\prsm)$, the tensor $B$ and the Riemannian invariants of the restriction of $h_{p,q}$ to the fibers. The latest have been computed in \cite{benyounes} and we will use them.
  	
  \end{enumerate}
  Based on these facts, the Levi-Civita connection  $\bar{\na}$ of $(E,h_{p,q})$ is given by
  \begin{eqnarray*}
  	\bar{\na}_{X^h}Y^h&=&(\na_X^MY)^h+B_{X^h}Y^h,\;\bar{\na}_{X^h}\al^v=(\na^E_{X}\al)^v+B_{X^h}\al^v,\;
  	\bar{\na}_{\al^v}X^h=B_{X^h}\al^v,\\
  	(\bar{\na}_{\al^v}\be^v)(a)&=&-p\om(a)[\langle \al,a\rangle_E \be+\langle \be,a\rangle_E \al]^v+(p\om(a)+q)\om_q(a)\langle\al,\be\rangle_E U(a)\\&&+pq\om(a)\om_q(a) 
  	\langle \al,a\rangle_E\langle \be,a\rangle_E U(a),\\
  	h_{p,q}(B_{X^h}\al^v,Y^h)&=&-h_{p,q}(B_{X^h}Y^h,\al^v),\; U(a)=a^v.
  \end{eqnarray*}The expression of $\bar{\na}_{\al^v}\be^v$ has been computed in \cite[Proposition 2.2]{benyounes}. 
  
  \begin{remark} \label{rem1} If $B=0$, from the relations above we can see that both $\V E$ and, $\mathcal{H}E$ are  parallel and according to de Rham's holonomy theorem, $(E,h_{p,q})$ is, at least locally, the Riemannian product  $M\times E_x$. Thus  $R^{\na^E}=0$ if and only if $(E,h_{p,q})$ is locally the Riemannian product of $M$ and the fiber.
  	
  \end{remark}

 The following formulas we will use later were established in \cite[Propositions 2.4, 2.9 and 2.11]{benyounes}.  
  
  \begin{pr}\label{f} We denote by $K^v$, $\ric^v$ and $s^v$ respectively the sectional curvature, the Ricci curvature and the scalar curvature of the restriction of $h_{p,q}$ to the fibers of $E$. Put
  	\[ F=p\om\om_q((p+2q-2)\om-q)\esp G=(p^2\om^2-p(p-2)\om+q )\om_q. \]	
  	Then:
  	\begin{enumerate}
  		\item[$(i)$] if $\al,\be\in E$ such that $\langle\al,\al\rangle_E=\langle\be,\be\rangle_E=1$ and $\langle\al,\be\rangle_E=0$ then
  		\[ K^v(\al^v,\be^v)=\frac{\om^{-p}}{1+q(\langle\al,a\rangle_E^2+\langle\be,a\rangle_E^2)}(F(a)(\langle\al,a\rangle_E^2+\langle\be,a\rangle_E^2)+G(a)). \]
  		\item[$(ii)$] For any $\al,\be\in\Ga(E)$,
  		\[ \ric(\al^v,\be^v)(a)=(|a|^2\om_q(a)F(a)+(r-2+\om_q(a))G(a))\langle\al,\be\rangle_E+
  		((r-1-\om_q(a))F(a)+q\om_q(a)G(a))
  		\langle\al,a\rangle_E\langle\be,a\rangle_E. \]		 
  		\item[$(iii)$] $s^v(a)=f(|a|^2)$ where
  		\begin{eqnarray*}
  			f(t)&=&\frac{(r-1)(1+t)^p}{(1+qt)^2(1+t)^2}\left( e t^3+bt^2+ct+d\right),\\
  			e&=&q^2(r-2), b=q((2 -r) p^{2}  + 2 \, (r-3) p   + 2 \, (r-2) q + r)   ,\\
  			c&=&(2-r) p^{2} + 2 \, (r-1) p q + (r -2)q^{2} + 2 \, (r-2) p  + 2 \, r q   ,\; d=r(2p+q).
  		\end{eqnarray*}	  	\end{enumerate}
  	\end{pr}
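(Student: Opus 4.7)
The plan is to reduce to computations on a single fiber, where we are simply computing the curvature of a rotationally symmetric metric on $\R^r$, and then exploit the $\mathrm{O}(r)$-invariance of that metric to make the combinatorics manageable. Fix $x\in M$ and trivialise $E_x$ via an orthonormal basis for $\prs_E$, so that the fiber becomes $(\R^r,g)$ with
\[ g_a(\al,\be)=(1+|a|^2)^{-p}\bigl(\langle\al,\be\rangle_E+q\langle\al,a\rangle_E\langle\be,a\rangle_E\bigr). \]
This metric is conformally equivalent (up to the factor $\om^p$) to the standard metric perturbed by a rank-one tensor along $a$, and both data depend on $a$ only through $|a|$, so $g$ is invariant under all orthogonal transformations of $\R^r$. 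As a first step I would check that the restriction to $E_x$ of the expression for $\bar\na_{\al^v}\be^v$ already given in the text (with $X^h$-terms dropped) coincides, via the Koszul formula applied to constant vertical vector fields, with the Levi-Civita connection $\na^v$ of $g$; this is the only nontrivial verification needed before the curvature computation, and it follows from $[\al^v,\be^v]=0$ together with a direct evaluation of the three $\al^v.g(\cdot,\cdot)$ derivatives.

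Once $\na^v$ is in hand I would compute $R^v(\al^v,\be^v)\ga^v$ from the definition $\na^v_{\al^v}\na^v_{\be^v}\ga^v-\na^v_{\be^v}\na^v_{\al^v}\ga^v$ (the bracket vanishes). Since $\na^v$ produces linear combinations of $\al^v,\be^v,U$ with scalar coefficients that are functions of $|a|^2$, $\langle\al,a\rangle_E$, $\langle\be,a\rangle_E$ and $\langle\al,\be\rangle_E$, the second covariant derivatives expand into a finite sum whose coefficients are precisely the functions $F$ and $G$ of the statement. For the sectional curvature in item $(i)$, the $\mathrm{O}(r)$-invariance reduces everything to the case where $\al,\be$ lie in the two-plane $\spa(a_\perp^{(1)},a_\perp^{(2)})\oplus\R a$, and one may additionally choose coordinates so that $a$ lies in $\spa(\al,\be)$; the formula for $K^v$ then emerges after normalising $\al^v,\be^v$ to be $g_a$-orthonormal, which is where the denominator $1+q(\langle\al,a\rangle_E^2+\langle\be,a\rangle_E^2)$ appears.

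For $(ii)$ I would pick, at the point $a$, an $\prs_E$-orthonormal basis $(e_1,\ldots,e_r)$ adapted to $a$, with $e_1=a/|a|$ when $a\neq0$, and convert it to a $g_a$-orthonormal basis by rescaling $e_1$ by $\om^{p/2}(1+q|a|^2)^{-1/2}=\om^{p/2}\om_q^{1/2}$ and $e_2,\ldots,e_r$ by $\om^{p/2}$. Summing $g_a(R^v(e_i^v,\al^v)\be^v,e_i^v)$ over this basis yields two scalar combinations of $F$ and $G$ corresponding respectively to the $\langle\al,\be\rangle_E$ and $\langle\al,a\rangle_E\langle\be,a\rangle_E$ parts, and isotropy forces this to be the full form of $\ric^v$. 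Tracing once more, using $\sum_i\langle e_i,e_i\rangle_E=r$ and $\sum_i\langle e_i,a\rangle_E^2=|a|^2$, gives $s^v$ as a polynomial in $|a|^2$ multiplied by $\om^{-p}$ (on account of switching back from $g_a$- to $\prs_E$-traces), which after collecting the powers of $t=|a|^2$ yields the coefficients $e,b,c,d$ of item $(iii)$.

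The main obstacle is purely combinatorial: organising the Leibniz expansion of $\na^v_{\al^v}(\na^v_{\be^v}\ga^v)$ without letting the terms proliferate. I would manage this by expanding each $\al^v$-derivative of the scalar factors $\om$, $\om_q$, $\langle\be,a\rangle_E$ through the simple rules $\al^v.\om=-2\om^2\langle\al,a\rangle_E$, $\al^v.\om_q=-2q\om_q^2\langle\al,a\rangle_E$, $\al^v.\langle\be,a\rangle_E=\langle\al,\be\rangle_E$, and then group the resulting terms by the tensor types they multiply ($\langle\al,\be\rangle_E\,U$, $\langle\al,a\rangle_E\be^v$, etc.); antisymmetrisation in $\al,\be$ then cancels all but the curvature contributions and forces the coefficients into the compact $F,G$-form used in the statement.
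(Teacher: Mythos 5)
Your strategy is sound but it is a genuinely different route from the paper's. The paper does not recompute anything for items $(i)$ and $(ii)$: it simply quotes Propositions 2.4 and 2.9 of \cite{benyounes} (the fiber metric depends only on the Euclidean vector space $(E_x,\prs_E)$, so the fiber computations done there for $E=TM$ apply verbatim to any rank-$r$ bundle), and for $(iii)$ it merely expands the closed formula $s^v=(r-1)\om^{-p}(2\al-(r-2)G)$ with $\al=|a|^2\om_qF+(r-2+\om_q)G$ from Proposition 2.11 of \cite{benyounes}. You instead rederive the curvature of the rotationally invariant metric $g_a=\om^p(\prs_E+q\langle\cdot,a\rangle_E\langle\cdot,a\rangle_E)$ on the fiber from scratch, using the already-displayed formula for $\bar\na_{\al^v}\be^v$ (legitimate, since the fibers are totally geodesic), $\mathrm{O}(r)$-invariance to reduce the sectional-curvature computation to an essentially three-dimensional one, and an adapted frame to take the two traces. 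What your approach buys is self-containedness and a transparent explanation of why the $TM$-case formulas of \cite{benyounes} generalize; what it costs is that the entire quantitative content — that the coefficients of the expansion assemble into exactly $F$, $G$ and then into the polynomial $et^3+bt^2+ct+d$ — is asserted to ``emerge'' rather than exhibited, which is precisely the part that cannot be checked from your sketch. Two small points to fix if you carry it out: the normalizing factors for the adapted frame should be $\om^{-p/2}$ and $\om^{-p/2}\om_q^{1/2}$ (you wrote $\om^{p/2}$, the reciprocal), and one cannot in general rotate $a$ into $\spa(\al,\be)$ — one can only reduce to the case where $a$ has at most one component orthogonal to that plane, which suffices since the answer depends only on $\langle\al,a\rangle_E$, $\langle\be,a\rangle_E$ and $|a|^2$.
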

  	
  	\begin{proof} The expressions of the sectional curvature and the Ricci curvature are given in \cite[Proposition 2.4 and Proposition 2.9]{benyounes}. For the scalar curvature we have expanded the expression given in \cite[ Proposition 2.11]{benyounes}. Indeed, the scalar curvature $s^v$ is given in \cite[ Proposition 2.11]{benyounes} by
  		\[ s^v=(r-1)\om^{-p}(2\al-(r-2)G)\esp \al=|a|^2\om_qF+(r-2+\om_q)G. \]
  		When we expand this expression we get the desired formula.
  		\end{proof}
  	
  		\subsection{Rigidity  of Cheeger-Gromoll metrics on the total space of  Euclidean vector bundles}\label{subsection22}

  		In this section, we give a precise image of what one can expect  from generalized Cheeger-Gromoll metrics  on the total space of an Euclidean vector bundle in term of constance of different curvatures (scalar,  Ricci  or sectional curvature) or local symmetry. 
  		
  		Through this subsection $\pi_E:E\too M$ is an Euclidean vector bundle over a Riemannian manifold and $\na^E$ a linear connection on $E$ which preserves $\prs_E$.
  		
  		 Corollary 2.5 in \cite{benyounes} asserts that when $E=TM$ the only generalized Cheeger-Gromoll metric with flat fibers is the Sasaki metric $h_{0,0}$. The following lemma gives a far more accurate assertion.
  		
  		\begin{Le}\label{le1} The scalar curvature $s^v$ of the restriction of $h_{p,q}$ to a fiber $E_x$ is constant if and only if $(p,q)=(0,0)$ or $(p,q)=(2,0)$. If $(p,q)=(0,0)$ then the fibers are flat and if $(p,q)=(2,0)$ then they have constant scalar curvature $4r(r-1)$.
  			
  		\end{Le}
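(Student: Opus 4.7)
\medskip

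The plan is to work directly with the explicit rational expression for $s^v$ given in Proposition \ref{f}(iii), namely $s^v(a)=f(|a|^2)$ with
\[ f(t)=\frac{(r-1)(1+t)^{p-2}}{(1+qt)^2}\left(et^3+bt^2+ct+d\right), \]
and to show that ``$f$ constant on $[0,\infty)$'' forces $(p,q)\in\{(0,0),(2,0)\}$. The ``if'' direction is an immediate substitution: if $(p,q)=(0,0)$ then $e=b=c=d=0$, so $f\equiv 0$; if $(p,q)=(2,0)$ then $e=b=0$, the coefficient $c=(2-r)\cdot 4+4(r-2)=0$, and $d=4r$, giving $f\equiv 4r(r-1)$, while the prefactor $(1+t)^{p-2}/(1+qt)^2$ collapses to $1$. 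This also supplies the two curvature values claimed in the statement.

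For the converse, I would first note that $f$ is real analytic on $(-1,\infty)$; constancy on $[0,\infty)$ is therefore equivalent to the polynomial identity
\[ (r-1)\bigl(et^3+bt^2+ct+d\bigr)=C\,(1+qt)^2(1+t)^{2-p}, \qquad C:=f(0)=(r-1)d, \]
holding as analytic functions of $t$. The left side is a polynomial of degree $\le 3$, so the right side must be one as well. I would then split into the cases $C=0$ and $C\ne 0$.

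In the case $C=0$ (equivalently $d=0$, i.e.\ $2p+q=0$), the four coefficients $e,b,c,d$ must all vanish. Using $q\ge 0$ and $r\ge 2$, the equation $e=q^2(r-2)=0$ forces either $q=0$ (hence $p=0$, giving $(0,0)$) or $r=2$; in the second branch, plugging $q=-2p$ with $p\le 0$ into $b$ yields $b=2q(1-p)$, whose vanishing requires $p=0$, returning to $(0,0)$. In the case $C\ne 0$ the function $(1+qt)^2(1+t)^{2-p}$ must be a polynomial of degree $\le 3$, which forces $2-p$ to be a non-negative integer and restricts $p$ to the finite list $\{-1,0,1,2\}$ if $q=0$ and to $\{1,2\}$ if $q>0$ and $q\ne 1$ (the degenerate case $q=1$ widens the list to $\{1,2,3,4\}$ because $(1+t)^2(1+t)^{2-p}=(1+t)^{4-p}$). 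I would then eliminate each candidate by direct coefficient comparison between $(r-1)P(t)$ and $C(1+qt)^2(1+t)^{2-p}$: for instance $(p,q)=(1,0)$ would require $c=d$, i.e.\ $r-2=2r$, which is impossible; and the cases with $q>0$ all reduce, via $e=q^2d$ or $b$-comparison, to identities of the form $r(1+q)=-2$ or $q(q+2)=0$, incompatible with $r\ge 2$ and $q>0$. The only surviving pair is $(p,q)=(2,0)$.

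The main obstacle is the tedious but mechanical case enumeration when $C\ne 0$: there are several integer values of $p$ to check separately (including the degenerate coincidence $q=1$ where $(1+qt)^2$ and $(1+t)^{2-p}$ combine), and for each one must verify that the system of equations relating $(e,b,c,d)$ to $(p,q,r)$ admits no solution with $r\ge 2$ and $q\ge 0$. Everything else is a straightforward substitution together with the observation that for non-integer $p$ the function $(1+t)^{2-p}$ is not a polynomial, which alone kills all non-integer $p$ whenever $C\ne 0$.
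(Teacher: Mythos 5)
Your argument is correct, and it takes a genuinely different route from the paper's. The paper differentiates $f$ (with the help of Sage), obtains $f'(t)$ as the quotient of a quartic polynomial by $(1+qt)^3(1+t)^{3-p}$, and forces all five coefficients $a_1,\dots,e_1$ of that quartic to vanish, solving the resulting system ($a_1=(r-2)(p-1)q^3=0$ leads to $q=0$, then $d_1=(2-r)p(p-1)(p-2)$ and $e_1=(r+2)p(p-2)$ give $p\in\{0,2\}$). You instead avoid differentiation altogether: constancy of the real-analytic function $f$ becomes the identity $(r-1)P(t)=C(1+qt)^2(1+t)^{2-p}$ with $C=(r-1)d$, and the requirement that the right-hand side be a polynomial of degree at most $3$ immediately confines $p$ to a short integer list (depending on whether $q=0$, $q=1$, or $q>0$ with $q\neq1$), after which coefficient comparison kills every candidate except $(2,0)$, with $(0,0)$ recovered from the $C=0$ branch. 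What your approach buys is conceptual transparency and independence from a computer-algebra computation; it also handles the $r=2$ degeneration cleanly inside the $C=0$ case, a point the paper's step ``$a_1=0$ iff $q=0$ or $p=1$'' glosses over. What it costs is that the $C\neq 0$ enumeration is only sketched: you exhibit the eliminations for $(1,0)$ and describe the pattern for $q>0$, but the cases $p\in\{-1\}$ (for $q=0$), $p\in\{3,4\}$ (for $q=1$) and $p=2$ with $q>0$ still need to be written out (they do all close; e.g.\ $p=1$, $q>0$ forces $r(1+q)=-2$ as you say, and $p=2$, $q>0$ forces $r=2$ and then $q^2+4q+1=0$). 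Two small additional remarks: the exclusion of non-integer $p$ should be phrased in terms of the product $(1+qt)^2(1+t)^{2-p}$ rather than $(1+t)^{2-p}$ alone (for $q\neq 1$ the factor $(1+qt)^2$ cannot cancel the branch point at $t=-1$, so the two statements are equivalent, but this deserves a word); and the assertion that the fibers are \emph{flat} for $(p,q)=(0,0)$ is not implied by $s^v\equiv 0$ -- it follows, as in the paper, from the sectional curvature formula of Proposition \ref{f}$(i)$, where $F=G=0$ when $p=q=0$ (or simply from the fact that the fiber metric is then the constant Euclidean one).
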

  		\begin{proof} According to Proposition \ref{f}, we have $s^v(a)=f(|a|^2)$.
  			A direct computation using the software Sage gives  
  			\[ f'(t)=\frac{(r-1)(1+t)^p}{(1+qt)^3(1+t)^3}\left( a_1t^4+b_1t^3+c_1t^2+d_1t+e\right) \]
  			where
  			\begin{eqnarray*}
  				a_1&=&{\left(r - 2\right)} {\left(p - 1\right)} q^{3},\;\\
  				b_1&=&-{\left(r p^{3} - 4 \, r p^{2} - 2 \, p^{3} - 2 \, r p q + 2 \, r p + 10 \, p^{2} + 3 \, r q + 4 \, p q + r - 10 \, p - 6 \, q + 2\right)} q^{2},\\
  				c_1&=&-2 \, r p^{3} q + 2 \, r p^{2} q^{2} + r p q^{3} + 7 \, r p^{2} q + 4 \, p^{3} q - 2 \, r p q^{2} - 2 \, p^{2} q^{2} - 3 \, r q^{3} - 2 \, p q^{3} - 5 \, r p q - 16 \, p^{2} q \\&&- 3 \, r q^{2} + 2 \, p q^{2} + 6 \, q^{3} + 12 \, p q - 6 \, q^{2},\\
  				d_1&=&-r p^{3} + 3 \, r p^{2} q - r q^{3} + 3 \, r p^{2} + 2 \, p^{3} - 6 \, r p q - 3 \, r q^{2} + 2 \, q^{3} - 2 \, r p - 6 \, p^{2} - 6 \, p q - 6 \, q^{2} + 4 \, p,\\
  				e_1&=&{\left(p^{2} - p q - q^{2} - 2 \, p\right)} {\left(r + 2\right)}.
  			\end{eqnarray*}
  			So $s^v$ is constant if and only if $a_1=b_1=c_1=d_1=e_1=0$. Or $a_1=0$ iff $q=0$ or $p=1$. If $p=1$ we replace in $e_1$ and we get $q^2+q+1=0$ which is impossible. So $q=0$ and $p\not=1$. Then $a_1=b_1=c_1=0$,  $d_1=(2-r)p(p-2)(p-1)$ and $e_1=(r+2)p(p-2)$ and the result follows. The last statement is a consequence of Proposition \ref{f} $(i)$.
  		\end{proof}
  		
  		We can give now a proof of Theorem \ref{main1}.

  		\begin{proof} According the O'Neill formulas (see \cite[pp.244]{bes}) and the expression of the scalar curvature of the restriction of $h_{p,q}$ to the fibers given in Proposition \ref{f}, we have
  			\[ s^E(x,a)=s^M(x)+f(|a|^2)-\sum_{i,j}h_{p,q}(B_{X_i^h}X_j^h,B_{X_i^h}X_j^h), \]
  			where
  			\begin{eqnarray*}
  				f(t)&=&\frac{(r-1)(1+t)^p}{(1+qt)^2(1+t)^2}\left( q^2(r-2)t^3+bt^2+ct+d\right),\\
  				 b&=&q((2 -r) p^{2}  + 2 \, (r-3) p   + 2 \, (r-2) q + r)   ,\\
  				c&=&(2-r) p^{2} + 2 \, (r-1) p q + (r -2)q^{2} + 2 \, (r-2) p  + 2 \, r q   ,\; d=r(2p+q).
  			\end{eqnarray*}	
  			and  $(X_1,\ldots,X_n)$ is a local frame of orthonormal vector field on $M$. Now
  			\[ h_{p,q}(B_{X_i^h}X_j^h,B_{X_i^h}X_j^h)=\frac{1}{4(1+|a|^2)^p} \langle R^{\na^E}(X_i,X_j)a,R^{\na^E}(X_i,X_j)a\rangle_E. \]
  			Note that we have used here the fact that $\langle R^{\na^E}(X_i,X_j)a,a\rangle_E=0$ which is a consequence of the fact that $\na^E$ preserves $\prs_E$. We deduce that
  			 \[ s^E(x,a)=s^M(x)+f(|a|^2)-\frac{\xi(a,a)}{4(1+|a|^2)^p}, \]where
  			 $\xi$ is the symmetric 2-form on $E$ given by
  			 \[ \xi(a,b)=\sum_{i,j}\langle R^{\na^E}(X_i,X_j)a,R^{\na^E}(X_i,X_j)b\rangle_E,\quad a,b\in\Ga(E). \]

  			If $($$(p,q)=(0,0)$, $R^{\na^E}=0$ and $s^M$ is constant$)$ or $($$(p,q)=(2,0)$, $R^{\na^E}=0$ and $s^M$ is constant$)$ then, according to Lemma \ref{le1}, $f$ is constant and hence $s^E$ is constant.
  			
  			Suppose now that $s^E$ is constant. Since $\xi(0,0)=0$ and $f(0)=r(r-1)(2p+q)$ we get 
  			$$s^E-s^M\circ\pi_A=r(r-1)(2p+q)$$ and hence $s^M$ is constant.
  			
  			If $R^{\na^E}=0$ then $s^v$ is constant and,  according to Lemma \ref{le1}, $(p,q)=(0,0)$ or $(p,q)=(2,0)$.
  			
  			If $(p,q)=(0,0)$ or $(p,q)=(2,0)$ then $f$ is constant and then $|B|^2$ is constant and since $\xi(0,0)=0$ then $R^{\na^E}=0$.
  			
  			Suppose  now $R^{\na^E}\not=0$, $(p,q)\not=(0,0)$ and $(p,q)\not=(2,0)$. So $\xi\not=0$ and we can choose $a$ such that $\xi(a,a)\not=0$ and $|a|=1$.  For any $t\in\R$,
  			\[ s^E(ta)-s^M\circ \pi_A(ta)=f(t^2)-\frac{\xi(a,a)t^2}{4(1+t^2)^p}=r(r-1)(2p+q). \]
  			Thus
  			\[ f(t)-\frac{\xi(a,a)t}{4(1+t)^p}=r(r-1)(2p+q), \quad \xi(a,a)>0, t\geq0.\eqno(E) \]
  			Suppose that $p>1$. Then ($p>1$ and $p\not=2$) or ($p=2$ and $q\not=0$) and hence
  			\[ \lim_{t\too\infty}\frac{t}{(1+t)^p}=0\esp \lim_{t\too\infty}f(t)=+\infty, \]which is impossible by virtue of $(E)$. 
  			
  			Suppose now that  $p<1$.  Then ($p<1$ and $p\not=0$) or ($p=0$ and $q\not=0$) and hence
  			\[ \lim_{t\too\infty}\frac{t}{(1+t)^p}=+\infty\esp \lim_{t\too\infty}f(t)=0, \]which is impossible by virtue of $(E)$.
  			
  			Let finish by showing that  the case $p=1$ is also impossible. Indeed, if $p=1$ then, by taking the derivative of $(E)$, we get
  			\[ \frac{(r-1)}{(1+qt)^3(1+t)^2}\left( b_1t^3+c_1t^2+d_1t+e_1\right)-\xi(a,a)\frac{1}{(1+t)^{2}}=0, \]
  			where
  			\begin{eqnarray*}
  				b_1&=&-{\left(r - 2\right)} q^{3},
  				c_1=-{\left(2 \, r q + 3 \, r - 4 \, q + 6\right)} q^{2},
  				d_1=-{\left((r-2) q^{2} + 3  (r+2) q  + 3  r  + 6\right)} q,\\
  				e_1&=&-{\left(q^{2} + q + 1\right)} {\left(r + 2\right)}.
  			\end{eqnarray*}For $t=0$ we get $\xi(a,a)=(r-1)e_1<0$ which it is impossible. This achieves to prove the first part of the theorem. 
  			
  			On the other hand, if $($$(p,q)=(0,0)$, $R^{\na^E}=0$ and $s^M$ is constant$)$ then $s^E=s^M\circ\pi_A$.
  			On the other hand, if $($$(p,q)=(2,0)$, $R^{\na^E}=0$ and $s^M$ is constant$)$ then $s^E=s^M\circ\pi_E+4r(r-1).$
  		\end{proof}
  		
  		According to Remark \ref{rem1}, $R^{\na^E}=0$ if and only if $(E,h_{p,q})$ is locally the Riemannian product of $M$ with the fiber. From this and Theorem \ref{main1}, one can easily deduce  the following corollaries.

  		\begin{co}\label{main2} $(E,h_{p,q})$ is an Einstein manifold  with Einstein constant $\la$ if and only if one of the following situations occurs:
  			\begin{enumerate}
  				\item $(p,q)=(0,0)$, $R^{\na^E}=0$, $\la=0$ and $(M,\prsm)$ is Ricci flat;
  				\item $(p,q)=(2,0)$, $R^{\na^E}=0$, $\la=4(r-1)$ and $(M,\prsm)$ is an Einstein manifold with Einstein constant $4(r-1)$.
  			\end{enumerate}
  			
  		\end{co}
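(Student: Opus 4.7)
The plan is to bootstrap from Theorem \ref{main1} via the local product structure of Remark \ref{rem1}, and then pin down the Einstein constant on each factor.

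First, if $(E,h_{p,q})$ is Einstein with Einstein constant $\la$ then its scalar curvature is the constant $(n+r)\la$. Theorem \ref{main1} then forces $R^{\na^E}=0$, $(p,q)\in\{(0,0),(2,0)\}$ and $s^M$ constant. By Remark \ref{rem1}, the vanishing of $R^{\na^E}$ means that $(E,h_{p,q})$ is locally the Riemannian product $M\times E_x$.

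Second, I would invoke the standard fact that a Riemannian product $M_1\times M_2$ is Einstein with constant $\la$ if and only if both factors are Einstein with that same constant $\la$ (the Ricci tensor of a product splits as the orthogonal sum of the Ricci tensors of the factors, with no mixed terms). Applied here, both $(M,\prsm)$ and the typical fiber $(E_x,h_{p,q}|_{E_x})$ must be Einstein with the common constant $\la$.

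Third, I determine $\la$ by computing the fiber Einstein constant from Proposition \ref{f}(i). When $(p,q)=(0,0)$, both $F$ and $G$ vanish identically, so $K^v\equiv 0$, the fiber is flat, and $\la=0$; hence $M$ is Ricci flat. When $(p,q)=(2,0)$, $\om_q\equiv 1$, the factor $(p+2q-2)\om-q$ in $F$ vanishes so $F\equiv 0$, and $G=p^2\om^2=4\om^2$. Proposition \ref{f}(i) then gives
\[
K^v(\al^v,\be^v)=\om^{-p}G(a)=(1+|a|^2)^{2}\cdot\frac{4}{(1+|a|^2)^{2}}=4
\]
at every point and every orthonormal pair, so the fiber has constant sectional curvature $4$ and is Einstein with constant $4(r-1)$. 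This forces $\la=4(r-1)$ and $(M,\prsm)$ Einstein with constant $4(r-1)$.

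The converse direction is immediate: under either set of hypotheses, $(E,h_{p,q})$ is locally the Riemannian product of two Einstein manifolds sharing the same Einstein constant, hence it is itself Einstein with that constant. I do not expect any real obstacle; the one point requiring a short check is that in the stereographic case $(p,q)=(2,0)$ the fiber is actually Einstein (and not merely of constant scalar curvature), which drops out of Proposition \ref{f}(i) because $F$ vanishes and the remaining $G$-term is independent of the chosen orthonormal pair.
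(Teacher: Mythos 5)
Your proof is correct and follows exactly the route the paper intends: the corollary is derived from Theorem \ref{main1} together with Remark \ref{rem1} and the splitting of the Ricci tensor of a local Riemannian product, which is precisely your argument, including the evaluation of $F$ and $G$ to get the fiber Einstein constants $0$ and $4(r-1)$. The only cosmetic point is that for $r=1$ the sectional-curvature formula of Proposition \ref{f}$(i)$ is vacuous and one should read the fiber computation from Proposition \ref{f}$(ii)$ instead, but the conclusion $\la=4(r-1)$ is unaffected.
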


  		When $E=TM$, $\prsm=\prs_E$ and $\na^E$ is the Levi-Civita connection of $\prsm$ we get the following  result which precises Corollary 2.10 in \cite{benyounes}.
  		
  		\begin{co}\label{main2c} $(TM,h_{p,q})$ is an Einstein manifold   if and only if $(p,q)=(0,0)$ and $R^M=0$ and in this case $(TM,h_{p,q})$ is flat.
  		\end{co}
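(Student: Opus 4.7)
The plan is to derive Corollary \ref{main2c} directly from Corollary \ref{main2} applied to the special case $E=TM$ with $\prs_E=\prsm$ and $\na^E$ the Levi-Civita connection of $\prsm$, so that $R^{\na^E}=R^M$ and the rank is $r=n$.

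First I would invoke Corollary \ref{main2}, which forces one of two situations. Case $(1)$ immediately gives the desired conclusion: $(p,q)=(0,0)$, $R^M=0$, Einstein constant $0$, and $M$ Ricci flat (automatic from $R^M=0$). The crux is to rule out case $(2)$ in the tangent bundle setting. There, we would have $(p,q)=(2,0)$, $R^M=0$, and $(M,\prsm)$ Einstein with Einstein constant $4(n-1)$. But $R^M=0$ implies $\ric^M=0$, which contradicts Einstein constant $4(n-1)>0$ (assuming $n\geq 2$; the case $n=1$ is vacuous since then $TM$ is $2$-dimensional and the statement is consistent with $(p,q)=(0,0)$). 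This contradiction eliminates case $(2)$, leaving only case $(1)$.

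It remains to verify that under case $(1)$ the manifold $(TM,h_{0,0})$ is in fact flat, not just Ricci flat. Here I would appeal to Remark \ref{rem1}: since $R^{\na^E}=R^M=0$, the O'Neill shape tensor $B$ vanishes by formula \eqref{b}, so both the vertical and horizontal distributions are parallel and $(TM,h_{0,0})$ is locally a Riemannian product of $(M,\prsm)$ and a fiber $T_xM$. The fiber, endowed with the restriction of $h_{0,0}$, is flat by Lemma \ref{le1} (the case $(p,q)=(0,0)$), and $M$ is flat by assumption. A Riemannian product of two flat manifolds is flat, so $(TM,h_{0,0})$ is flat.

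The converse direction is immediate: if $(p,q)=(0,0)$ and $R^M=0$ then the local product structure above together with flatness of both factors gives that $(TM,h_{0,0})$ is flat, hence trivially Einstein with constant $0$. There is no real obstacle in this proof; the only subtle point is the arithmetic check that the Ricci-flat requirement of case $(1)$ is automatic from $R^M=0$ while the Einstein constant of case $(2)$ is incompatible with the same curvature hypothesis, forcing uniqueness of the solution.
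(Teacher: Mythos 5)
Your proposal is correct and takes essentially the same route as the paper: Corollary \ref{main2c} is obtained there by specializing Corollary \ref{main2} to $E=TM$ (so $r=n$ and $R^{\na^E}=R^M$), ruling out the case $(p,q)=(2,0)$ because $R^M=0$ forces Ricci-flatness, which is incompatible with Einstein constant $4(n-1)>0$, and deducing flatness from the local product structure of Remark \ref{rem1} together with the flatness of the fibers for $(p,q)=(0,0)$.
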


  		\begin{co}\label{main3} $(E,h_{p,q})$ is locally symmetric   if and only if $R^{\na^E}=0$,  $(p,q)\in\{(0,0),(2,0)  \}$ and $M$ is locally symmetric.
  		\end{co}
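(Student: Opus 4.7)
My plan is to reduce the statement to Theorem \ref{main1} together with Remark \ref{rem1} and the elementary fact that a Riemannian product is locally symmetric if and only if both factors are.

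For the ``only if'' direction, the starting observation is that a locally symmetric Riemannian manifold has parallel Ricci tensor, hence constant scalar curvature. So if $(E,h_{p,q})$ is locally symmetric then $s^E$ is constant, and Theorem \ref{main1} immediately yields $R^{\na^E}=0$, $(p,q)\in\{(0,0),(2,0)\}$ and that $s^M$ is constant. Applying Remark \ref{rem1} (whose conclusion uses exactly $R^{\na^E}=0$), $(E,h_{p,q})$ is locally isometric to a Riemannian product $M\times E_x$, where $E_x$ carries the restriction of $h_{p,q}$. Because the de Rham splitting is preserved by $\bar\na$, the curvature tensor $\bar R$ decomposes blockwise, and $\bar\na\bar R=0$ forces $\na^M R^M=0$ on the horizontal factor. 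Thus $M$ is locally symmetric. (The statement about $s^M$ being constant is in fact superseded here.)

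For the ``if'' direction, assume $R^{\na^E}=0$, $(p,q)\in\{(0,0),(2,0)\}$ and $M$ locally symmetric. Remark \ref{rem1} again gives the local Riemannian product structure $E\simeq M\times E_x$. It then suffices to verify that the fiber, equipped with the restriction of $h_{p,q}$, is locally symmetric in each of the two allowed cases. For this I would use Proposition \ref{f}$(i)$: plugging in $(p,q)=(0,0)$ gives $F\equiv 0$ and $G\equiv 0$, so $K^v\equiv 0$ and the fiber is flat; plugging in $(p,q)=(2,0)$ gives $F\equiv 0$ (since $(p-2)\om\om_q\equiv 0$) and $G=p^2\om^2=4\om^2$, so $K^v=\om^{-2}\cdot 4\om^2=4$ is constant. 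In both cases the fiber has constant sectional curvature and is therefore locally symmetric. Combining the two locally symmetric factors yields the locally symmetric product $(E,h_{p,q})$.

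The only step that really requires work is the constant sectional curvature computation for the fiber at $(p,q)=(2,0)$, and this is a straightforward substitution into the already established formula of Proposition \ref{f}$(i)$; the rest is an assembly of Theorem \ref{main1}, Remark \ref{rem1}, and a standard fact about Riemannian products. I do not foresee any serious obstacle.
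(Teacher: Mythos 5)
Your proposal is correct and follows exactly the route the paper intends: the paper derives Corollary \ref{main3} from Theorem \ref{main1} together with Remark \ref{rem1} without writing out details, and your argument (constant scalar curvature from local symmetry, then the de Rham product splitting, then checking that the fiber is flat for $(0,0)$ and of constant sectional curvature $4$ for $(2,0)$ via Proposition \ref{f}$(i)$) fills in precisely those details. The only nitpick is cosmetic: for $(p,q)=(2,0)$ the vanishing of $F$ comes from the factor $(p+2q-2)\om-q$ in Proposition \ref{f}, not from $(p-2)\om\om_q$, but the conclusion $F\equiv 0$ is the same.
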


  		\begin{co}\label{main3c} $(TM,h_{p,q})$ is locally symmetric   if and only if 
  			$(p,q)\in\{(0,0),(2,0)  \}$ and $R^M=0$.
  		\end{co}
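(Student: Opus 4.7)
The plan is simply to specialize Corollary \ref{main3} to the case $E=TM$, $\prs_E=\prsm$, and $\na^E$ equal to the Levi-Civita connection of $(M,\prsm)$. In this setting the curvature $R^{\na^E}$ of $\na^E$ is precisely the Riemann curvature tensor $R^M$ of $(M,\prsm)$, so the three conditions delivered by Corollary \ref{main3} become: $R^M=0$, $(p,q)\in\{(0,0),(2,0)\}$, and $(M,\prsm)$ is locally symmetric.

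For the forward implication, I would assume $(TM,h_{p,q})$ is locally symmetric and quote Corollary \ref{main3} to deduce $R^M=0$ and $(p,q)\in\{(0,0),(2,0)\}$. For the converse, I would observe that $R^M=0$ already forces $M$ to be flat, and in particular $\na^M R^M=0$, so $(M,\prsm)$ is automatically locally symmetric. Hence the additional hypothesis on $M$ appearing in Corollary \ref{main3} is redundant here, and the two conditions $R^M=0$ together with $(p,q)\in\{(0,0),(2,0)\}$ are sufficient to apply that corollary and conclude that $(TM,h_{p,q})$ is locally symmetric.

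There is essentially no obstacle; the only point worth flagging is the redundancy observation above, which is what allows the statement to be phrased with only two conditions rather than the three that appear in the general Corollary \ref{main3}. The proof is therefore a short deduction rather than an independent computation.
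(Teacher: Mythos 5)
Your deduction is correct and is exactly the route the paper intends: Corollary \ref{main3c} is obtained by specializing Corollary \ref{main3} to $E=TM$ with the Levi-Civita connection, where $R^{\na^E}=R^M$, and noting that $R^M=0$ already gives $\na^M R^M=0$, so the local-symmetry hypothesis on $M$ is redundant. Nothing is missing.
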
	
  		The following corollary is a consequence of O'Neill formulas and Theorem \ref{main1}.
  		
  		\begin{co}\label{main4} The following assertions are equivalent:
  			\begin{enumerate}
  				\item $(E,h_{p,q})$ has  constant sectional curvature.
  				\item The curvature of $(E,h_{p,q})$ vanishes.
  				\item $p=q=0$, $R^M=0$ and $R^{\na^E}=0$.
  			\end{enumerate}

  		\end{co}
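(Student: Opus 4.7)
The plan is to close the chain $(3)\Rightarrow(2)\Rightarrow(1)\Rightarrow(3)$, noting that only the last implication requires nontrivial work. The implication $(2)\Rightarrow(1)$ is immediate: vanishing curvature is in particular constant sectional curvature (equal to $0$). For $(3)\Rightarrow(2)$, I would invoke Remark \ref{rem1} to identify $(E,h_{0,0})$ locally with the Riemannian product $M\times E_x$ (this uses $R^{\na^E}=0$), then observe via Lemma \ref{le1} that the fiber metric is flat precisely when $(p,q)=(0,0)$; combined with the hypothesis $R^M=0$, both factors are flat so their product is flat.

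The main implication $(1)\Rightarrow(3)$ proceeds as follows. Constant sectional curvature forces constant scalar curvature, so Theorem \ref{main1} applies and delivers three pieces of information at once: $R^{\na^E}=0$, $(p,q)\in\{(0,0),(2,0)\}$, and $s^M$ is constant. With $R^{\na^E}=0$ in hand, Remark \ref{rem1} gives that $(E,h_{p,q})$ is locally the Riemannian product of $M$ with a fiber $E_x$. I now bring in the standard fact about Riemannian products: if $M$ and $N$ are positive-dimensional and $M\times N$ has constant sectional curvature $c$, then $c=0$ and both factors are flat. Indeed, a plane spanned by a tangent vector to $M$ and one to $N$ has sectional curvature $0$ in a product, which forces $c=0$; then each factor inherits flatness from the flat product. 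Consequently $(E,h_{p,q})$ is flat, $M$ is flat (so $R^M=0$), and the fiber $E_x$ is flat.

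It remains to discard $(p,q)=(2,0)$: by Lemma \ref{le1}, this choice gives the fiber a constant scalar curvature equal to $4r(r-1)$, which is strictly positive for $r\geq 2$ and is therefore incompatible with fiber flatness. Hence $(p,q)=(0,0)$, completing the proof of $(1)\Rightarrow(3)$. The degenerate case $r=1$ needs at most a brief remark, since then the fiber is one-dimensional and flat automatically; but for $r=1$ one can directly check that $h_{2,0}$ still reduces to a flat metric only under the same collapse, so the statement is not violated.

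The main obstacle is essentially absent: once Theorem \ref{main1}, Remark \ref{rem1}, and Lemma \ref{le1} are available, the corollary is a structural consequence together with the classical vanishing of mixed sectional curvature in a Riemannian product. The only subtlety is making sure to use the constancy of scalar curvature as the trigger for Theorem \ref{main1}, and then separating the two surviving parameter pairs using the positivity of the fiber scalar curvature in the stereographic case $(p,q)=(2,0)$.
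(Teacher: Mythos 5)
Your proof is correct and follows essentially the route the paper intends: constant sectional curvature gives constant scalar curvature, Theorem \ref{main1} then yields $R^{\na^E}=0$, $(p,q)\in\{(0,0),(2,0)\}$ and $s^M$ constant, the local product structure of Remark \ref{rem1} forces flatness of both factors (hence $R^M=0$), and the fiber scalar curvature $4r(r-1)$ from Lemma \ref{le1} rules out $(2,0)$. The only inaccuracy is your aside on $r=1$: there the fiber is one-dimensional, so $(E,h_{p,q})$ with $R^M=0$ is flat for \emph{every} $(p,q)$ and the equivalence with $(3)$ genuinely fails; this degenerate case is implicitly excluded throughout the paper (Lemma \ref{le1} and Theorem \ref{main1} likewise presuppose $r\geq2$), so it does not affect the substance of your argument.
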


\section{ Transitive Euclidean Lie algebroids and their principal curvature}\label{section3}

In this section, we consider a transitive Lie algebroid $(A,M,\rho)$ and an Euclidean product $\prsa$ on $A$. We will show that we can define canonically a Riemannian metric $\prsm$ on $M$ and a connection $\na^A$ on $A$ which preserves $\prsa$. Hence, according to the last section, we can define the family of generalized Cheeger-Gromoll metrics on $A$. The O'Neill shape tensor of these metrics is given by the curvature of $\na^A$. We compute this curvature to discover that it depends on the Lie algebroid structure and the curvature of $M$ and hence encompasses a rich geometrical situation.

A Lie algebroid  over a smooth manifold $M$ is a vector bundle
$\pi_A:A\too M$ together with a $\R$-Lie algebra structure $\br_A$ on  $\Ga(A)$ and a vector bundle homomorphism  $\rho:A\too TM$ called
{\it anchor} such that,
for any  $a,b\in\Ga(A)$ and for any $f\in C^\infty(M)$, we have the Leibniz identity
\begin{equation}\label{eq1}[a,fb]_A=f[a,b]_A+\rho(a)(f)b.\end{equation}
An immediate consequence of this definition is that
the induced map $\rho:\Ga(A)\too\Ga(TM)$ is a Lie algebra
homomorphism
and for any $x\in
M$, there is an induced Lie bracket  on
${\G}_x={\mathrm Ker}(\rho_x)\subset A_x$ which makes it
into a Lie algebra.  

In this paper, we deal mostly with transitive Lie algebroids, i.e., Lie algebroids $(A,M,\rho)$ such that $\rho$ is surjective.	 In this case if $\G=\ker\rho$ then $\pi_\G:\G\too M$ is a  Lie algebroid with vanishing anchor called the adjoint Lie algebroid of $A$ and we have an exact sequence of Lie algebroids called {\it Atiyah sequence}
\begin{equation}\label{eq2}0\too\G\too A\stackrel{\rho}\too TM\too0.\end{equation}We denote by $\br_\G$ the  induced Lie bracket on $\Ga(\G)$.

A {\it splitting} of $A$ is a splitting of the Atiyah sequence, i.e., a vector bundle homomorphism $\ga:TM\too A$ such that $\rho\circ\ga=\mathrm{Id}_{TM}$. This determines a connection $\na^\ga$ on  $\G$ and $\Om^\ga\in\Om^2(M,\G)$ (the curvature of $\ga$) by
\begin{equation} \label{eq3}
\na_{X}^\ga U=[\ga(X),U]_A\esp \Om^\ga(X,Y)=\ga([X,Y])-[\ga(X),\ga(Y)]_A,\quad X,Y\in\Ga(TM),U\in\Ga(\G).
\end{equation} 
The following relations are immediate consequences of the Jacobi identity applied to $[\;,\;]_A$:
\begin{eqnarray}
&&\na^\ga_X[U,V]_\G=[\na^\ga_XU,V]_\G+[U,\na^\ga_XV]_\G,\label{eq4}\\ 
&&R^{\na^\ga}(X,Y)U:=\na^\ga_{[X,Y]}U-\na^\ga_X\na^\ga_YU+\na^\ga_Y\na^\ga_XU=[\Om^\ga(X,Y),U]_\G,\;\label{eq5}\\
&&d^{\na^\ga}\Om^\ga(X,Y,Z):=\oint\left(\na_X^\ga\Om^\ga(Y,Z)-\Om^\ga([X,Y],Z) \right)=0,\label{eq6}
\end{eqnarray}where $\oint$ stands for the cyclic sum, $U,V\in\Ga(\G)$ and $X,Y,Z\in\Ga(TM)$. As a consequence of \eqref{eq4}, one can deduce that if $x,y\in M$ and $\mu$ a path joining $x$ to $y$ then the parallel transport along $\mu$ with respect to $\na^\ga$, $\tau_\mu:\G_x\too\G_y$ is an isomorphism of Lie algebras. So if $M$ is connected then the fibers of $\G$ are isomorphic as Lie algebras. 

Conversely, given a  Lie algebroid $\pi_\G:\G\too M$ with vanishing anchor, a connection $\na$ on $\G$ and $\Om\in \Om^2(M,\G)$ satisfying \eqref{eq4}-\eqref{eq6} then $A=TM\oplus\G$ with the anchor $\mathrm{Id}_{TM}\oplus 0$ and the Lie bracket on $\Ga(A)$ given by
\begin{equation}\label{eq8} [X+U,Y+V]_A=[X,Y]+\{\Om(Y,X)+[U,V]_\G+\na_XV-\na_YU\},\quad X,Y\in\Ga(TM),U,V\in\Ga(\G) \end{equation}
is a transitive Lie algebroid.

A  transitive Euclidean Lie algebroid is a transitive Lie algebroid $(A,M,\rho)$ together with
an Euclidean product $\prsa$  on the vector bundle $\pi_A:A\too M$. There are two important objects naturally associated to  $(A,M,\rho,\prsa)$.
\begin{enumerate}
	\item A splitting of the Atiyah sequence of $A$ which induces a Riemannian metric on $M$. Indeed, For any $x\in M$, we denote by $\G_x^\perp$ the orthogonal of $\G_x$ with respect to $\prsa$ thus $$A=\G\oplus\G^\perp.$$ The restriction of 
	$\rho$ to $\G^{\perp}$ is an isomorphism onto $TM$ and its inverse $\ga:TM\too \G^\perp$ defines a splitting of the Atiyah sequence. We denote by $\Om^\ga$ and $\na^\ga$ the associated curvature and connection defined by \eqref{eq3}. It follows from what above that $A$ as a transitive Euclidean Lie algebroid is canonically isomorphic to $TM\oplus\G$ with the Lie bracket given by \eqref{eq8}, the anchor $\mathrm{Id}_{TM}\oplus0$ and the Euclidean product $\prsm\oplus\prs_\G$, where $\prs_\G$ is the restriction of $\prs_A$ to $\G$ and $\prsm$ is the Riemannian metric on $M$ given by $\langle X,Y\rangle_{TM}=\langle \ga(X),\ga(Y)\rangle_A$.
	
	\item The analogous of the Levi-Civita
	connection. Indeed,  the Koszul formula
	\begin{eqnarray}
	2\langle\D_ab,c\rangle_A&=&{\rho}(a).\langle b,c\rangle_A+{\rho}(b).\langle a,c\rangle_A-
	{\rho}(c).\langle a,b\rangle_A\label{koszul}\\
	&&	+\langle[c,a]_A,b\rangle_A+\langle[c,b]_A,a\rangle_A+\langle[a,b]_A,c\rangle_A,\quad a,b,c\in\Ga(A)\nonumber\end{eqnarray}defines a $\R$-bilinear map $\D:\Ga(A)\times\Ga(A)\too\Ga(A)$ characterized by the following three properties:
	\begin{enumerate}
		\item[$(i)$] $\D_{fa}b=f\D_ab$, $\D_a(fb)=\rho(a)(f)b+f\D_ab$, for any $a,b\in \Ga(A)$, $f\in C^\infty(M)$,
		\item[$(ii)$] $\D$ is metric, i.e., $\rho(a).\langle b,c\rangle_A=\langle\D_ab,c\rangle_A+\langle b,\D_ac\rangle_A$,
		\item[$(iii)$] $\D$ is torsion free, i.e., $\D_ab-\D_ba=[a,b]_A.$
	\end{enumerate}
	Thus $\D$ is a connection on the Lie algebroid $A$
	 well-known as the \emph{ Levi-Civita connection} associated to
	the Euclidean Lie algebroid $A$. The reader can consult \cite{boucetta, fernandes} for a detailed study of connections on Lie algebroids.

\end{enumerate}

For our purpose, we extract from the splitting $\ga$ and $\D$  the  necessary ingredients for defining generalized Cheeger-Gromoll metrics on the Euclidean bundle  $A$. Indeed,  we have already defined a Riemannian metric $\prsm$ on $M$. We define now a connection $\na^A$ on the vector bundle $A$  by
\begin{equation}\label{na}  \na^A_Xa=\D_{\ga(X)}a,\quad  X\in\Ga(TM), a\in\Ga(A),  \end{equation}and since $\D$ is metric, $\na^A$ preserves $\prsa$. Hence, according to the last section, we can define the family of generalized Cheeger-Gromoll metrics on $A$. The O'Neill shape tensor of these metrics is given by the curvature of $\na^A$. From the definition of $\na^A$, it is clear that $R^{\na^A}$ is an invariant of the transitive Euclidean Lie algebroid structure. 

\begin{Def} Let $A$ be a  transitive Euclidean Lie algebroid. We call the tensor field $R^{\na^A}$  principal curvature of $A$.
	
\end{Def}

In order to compute the principal curvature we need to explicit the expression of the Levi-Civita connection $\D$.

Thank to the splitting $\ga$, $\D$ can be computed by the means of the Levi-Civita connection $\na^M$ of $(M,\prsm)$, the Levi-Civita product
$\widehat{\D}$ 
associated  to $(\G,\prs_\G)$ and given by
\begin{equation}\label{eq7}
2\langle \widehat{\D}_UV,W\rangle_\G=\langle[U,V]_\G,W\rangle_\G+
\langle[W,V]_\G,U\rangle_\G+\langle[W,U]_\G,V\rangle_\G,\quad U,V,W\in\Ga(\G),
\end{equation} and the analogous O'Neill tensors \cite{one} (see \cite{bes}
for a detailed presentation)
$T$ and $H$  elements of $\Ga(A^*\otimes A^*\otimes A)$ whose values on two sections
$a,b\in\Ga(A)$ are given by
\begin{equation}\label{o}T_ab=(\D_{a^t}b^t)^\perp+(\D_{a^t}b^\perp)^t\quad\mbox{and}\quad H_ab=(\D_{a^\perp}b^t)^\perp+(\D_{a^\perp}b^\perp)^t,\end{equation}where $a^t$ is the projection on $\G$ and $a^\perp$ is the projection on $\G^\perp$. 
Indeed, we have the following relations which sum up all the properties of $\D$, $H$ and $T$ and which are easy to establish using Koszul formula \eqref{koszul}.
\begin{pr} \label{pr1} For any $a,b\in\Ga(A)$, any $U,V\in\Ga(\G)$ and any $X\in\Ga(TM)$, we have
	\begin{enumerate}
		\item[$(i)$] $T_{a^\perp}=H_{a^t}=0$, $H_{a^\perp}b^\perp\in\Ga(\G)$, $H_{a^\perp}b^t\in\Ga(\G^\perp)$, $T_{a^t}b^t\in\Ga(\G^\perp)$, $T_{a^t}b^\perp\in\Ga(\G)$,
		\item[$(ii)$] $H_{a^\perp}b^\perp=\frac12[a^\perp,b^\perp]_A^t=-\frac12\Om^\ga(\rho(a^\perp),\rho(b^\perp))$, 
		$\langle T_UV,\ga(X)\rangle_A=-\frac12\na^\ga_X(\prs_\G)(U,V)$,
		\item[$(iii)$] $\langle H_{a^\perp}b^t,c^\perp\rangle_A=-\langle H_{a^\perp}c^\perp,b^t\rangle_A,\;
		\langle T_{a^t}b^\perp,c^t\rangle_A=-\langle T_{a^t}c^t,b^\perp\rangle_A$,
		\item[$(iv)$] $\D_UV=\widehat{\D}_UV+T_UV$, $\D_{a^\perp}b^\perp=\ga(\na^M_{\rho(a^\perp)}\rho(b^\perp))+H_{a^\perp}b^\perp$,
		\item[$(v)$] $\D_{a^t}b^\perp=H_{b^\perp}a^t+T_{a^t}b^\perp$, $\D_{a^\perp}b^t=[a^\perp,b^t]_A+\D_{b^t}a^\perp$.
	\end{enumerate}
	
\end{pr}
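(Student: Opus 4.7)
The plan is to derive all five assertions from the Koszul formula \eqref{koszul} together with the three characterizing properties of $\D$ (Leibniz, metricity, torsion-freeness), using the orthogonal splitting $A = \G \oplus \G^\perp$ and the fact that $\rho|_{\G^\perp}:\G^\perp \to TM$ is an isometry by construction of $\prsm$. I would treat the items in the order $(i)\to(iii)\to(iv)\to(v)\to(ii)$, since each step feeds the next.

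Part $(i)$ is immediate bookkeeping from \eqref{o}: only summands containing $a^t$ survive in $T$, only those containing $a^\perp$ in $H$, and the stated $\G$ vs.\ $\G^\perp$ type of each output can be read off. Part $(iii)$ is the metric property combined with $\langle\G,\G^\perp\rangle_A = 0$: for instance $\langle H_{a^\perp}b^t,c^\perp\rangle_A = \langle \D_{a^\perp}b^t,c^\perp\rangle_A = \rho(a^\perp).\langle b^t,c^\perp\rangle_A-\langle b^t,\D_{a^\perp}c^\perp\rangle_A=-\langle b^t,H_{a^\perp}c^\perp\rangle_A$, and the analogue for $T$.

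For $(iv)$ the two formulas say that Koszul \eqref{koszul} ``collapses'' when all three arguments sit in the same distribution. With $U,V,W\in\Ga(\G)$ every $\rho$-term vanishes and $[\cdot,\cdot]_A$ restricts to $[\cdot,\cdot]_\G$, so \eqref{koszul} reduces verbatim to \eqref{eq7}, giving $(\D_UV)^t=\widehat{\D}_UV$; the $\G^\perp$-component is $T_UV$ by definition. With $a^\perp,b^\perp,c^\perp\in\Ga(\G^\perp)$ and $X=\rho(a^\perp)$, $Y=\rho(b^\perp)$, $Z=\rho(c^\perp)$, the isometric identification $\langle a^\perp,b^\perp\rangle_A=\langle X,Y\rangle_{TM}$ together with $[a^\perp,b^\perp]_A^\perp=\ga([X,Y])$ (from \eqref{eq3}) turns \eqref{koszul} into the Koszul formula for $\na^M$, whence $(\D_{a^\perp}b^\perp)^\perp=\ga(\na^M_XY)$; the $\G$-component is $H_{a^\perp}b^\perp$ by definition. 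Part $(v)$ then falls out of torsion-freeness: $\D_{a^t}b^\perp-\D_{b^\perp}a^t=[a^t,b^\perp]_A$, and since $[U,\ga(X)]_A=-\na^\ga_XU\in\Ga(\G)$ the $\G^\perp$-projections of the two connection terms coincide, forcing $\D_{a^t}b^\perp=T_{a^t}b^\perp+H_{b^\perp}a^t$; the other formula is torsion-freeness rewritten.

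Part $(ii)$ is where the real work lies. The formula for $T$ is obtained by substituting $c=\ga(X)$ into the Koszul formula for $\langle\D_UV,c\rangle_A$: the terms involving $\rho(U)$, $\rho(V)$, and $\langle[U,V]_\G,\ga(X)\rangle_A$ all vanish, and what remains rearranges into $-\tfrac12\na^\ga_X(\prs_\G)(U,V)$. The main obstacle is the identity $H_{a^\perp}b^\perp=\tfrac12[a^\perp,b^\perp]_A^t$. I plan a two-step argument. First, projecting the torsion equation to $\G$ yields $H_{a^\perp}b^\perp-H_{b^\perp}a^\perp=[a^\perp,b^\perp]_A^t$. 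Second, to promote this to the one-half identity, I need the skew-symmetry $H_{a^\perp}a^\perp=0$: applying Koszul to $\langle \D_{a^\perp}a^\perp,U\rangle_A$ for arbitrary $U\in\Ga(\G)$, every term vanishes except $\langle[U,a^\perp]_A,a^\perp\rangle_A$ (using $\rho(U)=0$ and $\langle U,a^\perp\rangle_A=0$), and this last term also vanishes because $[U,a^\perp]_A=-\na^\ga_{\rho(a^\perp)}U\in\Ga(\G)$ while $a^\perp\in\Ga(\G^\perp)$. The identification $[\ga(X),\ga(Y)]_A^t=-\Om^\ga(X,Y)$ from \eqref{eq3} finishes the proof. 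This step is delicate because it is the precise point where the Lie algebroid structure and the Euclidean structure on $A$ interact, and it is what ties the horizontal O'Neill-type tensor $H$ to the curvature $\Om^\ga$ of the canonical splitting.
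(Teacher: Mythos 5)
Your proof is correct and follows exactly the route the paper indicates (it gives no written proof, only the remark that the relations ``are easy to establish using Koszul formula \eqref{koszul}''): every item is derived from the Koszul formula, metricity, torsion-freeness and the splitting $A=\G\oplus\G^\perp$, and your key step --- proving $H_{a^\perp}a^\perp=0$ via Koszul and $[U,\ga(X)]_A=-\na^\ga_XU\in\Ga(\G)$ to upgrade the torsion identity to $H_{a^\perp}b^\perp=\tfrac12[a^\perp,b^\perp]_A^t$ --- is sound. No gaps.
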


From Proposition \ref{pr1}, one can deduce easily the following relations between the connexions $\na^A$, $\na^M$ and $\na^\ga$:
\begin{eqnarray}
	\na_{X}^AY^\ga &=&(\na_X^MY)^\ga+H_{X^\ga}Y^\ga=(\na_X^MY)^\ga-\frac12\Om^\ga(X,Y),\nonumber\\
	\na^A_X\al&=&\na^\ga_X\al+T_{\al}X^\ga+H_{X^\ga}\al,\quad X,Y\in\Ga(TM),\al\in\Ga(\G),\label{ec}
\end{eqnarray}	where  $X^\ga=\ga(X)$. 
  		
  Having these formulas in mind we can compute now the expression of the principal curvature of $A$.

  \begin{pr}\label{fb} We have, for any $X,Y,Z\in\Ga(TM)$ and $U\in\Ga(\G)$,
  	\begin{eqnarray*}
  		R^{\na^A}(X,Y)Z^\ga&=&\left\{(R^M(X,Y)Z)^\ga+H_{Y^\ga}H_{X^\ga}Z^\ga-H_{X^\ga}H_{Y^\ga}Z^\ga\right\}+\left\{T_{H_{X^\ga}Z^\ga}Y^\ga-T_{H_{Y^\ga}Z^\ga}X^\ga -\frac12 \na_Z^{M,\ga}\Om^\ga(X,Y)\right\},\\
  		(R^{\na^A}(X,Y)U)^t&=&
  		R^{\na^\ga}(X,Y)U+H_{Y^\ga}H_{X^\ga}U-H_{X^\ga}H_{Y^\ga}U+T_{U}[X,Y]^\ga\\&&-T_{\na^\ga_{Y}U}X^\ga-\na^\ga_{X}T_{U}Y^\ga-T_{T_{U}Y^\ga}X^\ga
  		+T_{\na^\ga_{X}U}Y^\ga+\na^\ga_{Y}T_{U}X^\ga+T_{T_{U}X^\ga}Y^\ga,\\
  		\langle R^{\na^A}(X,Y)U,Z^\ga\rangle_A&=&-\langle R^{\na^A}(X,Y)Z^\ga,U\rangle_A,
  	\end{eqnarray*}where $R^M$ is the curvature of $\na^M$ and $R^{\na^\ga}$ is the curvature of $\na^\ga$ and
  	$$\na_Z^{M,\ga}\Om^\ga(X,Y)=	\na^\ga_{Z}\Om^\ga(X,Y)-\Om^\ga(X,\na^M_ZY)-\Om^\ga(\na^M_ZX,Y).$$
  	
  \end{pr}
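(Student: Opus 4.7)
The plan is to expand $R^{\na^A}(X,Y) = \na^A_{[X,Y]} - \na^A_X\na^A_Y + \na^A_Y\na^A_X$ directly from the two cases of \eqref{ec}, using Proposition \ref{pr1}$(i)$ to sort each summand into its $\G^\perp$- or $\G$-component. For the first identity I write $\na^A_Y Z^\ga = (\na^M_Y Z)^\ga + H_{Y^\ga} Z^\ga$ (lying respectively in $\G^\perp$ and $\G$), apply $\na^A_X$ to each piece via the appropriate case of \eqref{ec}, symmetrize in $(X,Y)$, and subtract $\na^A_{[X,Y]}Z^\ga = (\na^M_{[X,Y]}Z)^\ga + H_{[X,Y]^\ga}Z^\ga$. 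The $\G^\perp$-components assemble without effort into $(R^M(X,Y)Z)^\ga + H_{Y^\ga}H_{X^\ga}Z^\ga - H_{X^\ga}H_{Y^\ga}Z^\ga$, matching the first bracket of the statement.

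The $\G$-component is the delicate point. After substituting $H_{a^\perp}b^\perp = -\frac12\Om^\ga(\rho(a^\perp),\rho(b^\perp))$ using Proposition \ref{pr1}$(ii)$, the non-$T$ part becomes a combination of six $\Om^\ga$-type terms of the shape $\frac12\Om^\ga([X,Y],Z)$, $\frac12\Om^\ga(X,\na^M_Y Z)$, $\frac12\Om^\ga(Y,\na^M_X Z)$, $\frac12\na^\ga_X\Om^\ga(Y,Z)$, $\frac12\na^\ga_Y\Om^\ga(X,Z)$ (with appropriate signs coming from the $H_{[X,Y]^\ga}Z^\ga$ piece and the $\na^\ga_X H_{Y^\ga}Z^\ga$, $\na^\ga_Y H_{X^\ga}Z^\ga$ pieces). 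Applying the Bianchi identity \eqref{eq6} to rewrite $\na^\ga_X\Om^\ga(Y,Z) - \na^\ga_Y\Om^\ga(X,Z)$ and using the torsion-freeness of $\na^M$ to replace $[X,Y]$, $[Y,Z]$, $[Z,X]$ by the corresponding $\na^M$-differences, these six terms collapse exactly to $-\frac12\na^{M,\ga}_Z\Om^\ga(X,Y)$. Combined with the two surviving $T$-terms (produced when $\na^A_X$ is applied to the $\G$-valued section $H_{Y^\ga}Z^\ga$ via the $T_\al X^\ga$ summand of \eqref{ec}), this gives the second bracket of the statement.

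For $(R^{\na^A}(X,Y)U)^t$ the same recipe applies: I decompose $\na^A_Y U = \na^\ga_Y U + T_U Y^\ga + H_{Y^\ga} U$ (the first two terms lying in $\G$, the third in $\G^\perp$), apply $\na^A_X$ again using \eqref{ec}, and retain only the $\G$-component after antisymmetrization and subtraction of $\na^A_{[X,Y]} U$. Here no Bianchi-type identity is needed: the pure $\na^\ga$ pieces reassemble into $R^{\na^\ga}(X,Y)U$ by definition, the two $H\circ H$ terms come from the $\G$-valued summands $H_{X^\ga}H_{Y^\ga}U$ and $H_{Y^\ga}H_{X^\ga}U$ (recall $H_{a^\perp}b^t \in \G^\perp$, so a second $H_{\cdot^\ga}$ lands back in $\G$), and the remaining $T$-terms match the statement by direct inspection. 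The final antisymmetry $\langle R^{\na^A}(X,Y)U, Z^\ga\rangle_A = -\langle R^{\na^A}(X,Y)Z^\ga, U\rangle_A$ is automatic from the fact that $\na^A$ preserves $\prsa$, which forces $R^{\na^A}(X,Y)$ to be skew-adjoint with respect to $\prsa$. The main obstacle is the $\Om^\ga$-collapse in the $\G$-part of the first formula: it is the only step requiring a nontrivial identity (Bianchi together with torsion-freeness) rather than routine bookkeeping from Proposition \ref{pr1}$(i)$ and \eqref{ec}.
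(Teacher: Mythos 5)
Your proposal is correct and follows essentially the same route as the paper: direct expansion of $R^{\na^A}$ via \eqref{ec}, sorting terms by their $\G$/$\G^\perp$ components, collapsing the $\Om^\ga$-terms with the Bianchi identity \eqref{eq6} together with torsion-freeness of $\na^M$, and obtaining the $\G^\perp$-part of $R^{\na^A}(X,Y)U$ from skew-adjointness. Nothing essential is missing.
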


  \begin{proof} 
  	It is a straightforward computation using  \eqref{ec}. Indeed,
  	\begin{eqnarray*}
  		R^{\na^A}(X,Y)Z^\ga
  		&=&\na^A_{[X,Y]}Z^\ga-\na^A_{X}\na^A_{Y}Z^\ga+\na^A_{Y}\na^A_{X}Z^\ga\\
  		&=&(\na^M_{[X,Y]}Z)^\ga-\frac12\Om^\ga([X,Y],Z)-\na^A_{X}(\na^M_YZ)^\ga+\frac12\na^A_{X}
  		\Om^\ga(Y,Z)+\na^A_{Y}(\na^M_XZ)^\ga-\frac12\na^A_{Y}
  		\Om^\ga(X,Z)\\
  		&=&(R^M(X,Y)Z)^\ga-\frac12\Om^\ga([X,Y],Z)+\frac12\Om^\ga(X,\na^M_YZ)-\frac12\Om^\ga(Y,\na^M_XZ)\\
  		&&+\frac12\na^{\ga}_{X}\Om^\ga(Y,Z)-T_{H_{Y^\ga}Z^\ga}X^\ga-H_{X^\ga}H_{Y^\ga}Z^\ga
  		-\frac12\na^{\ga}_{Y}\Om^\ga(X,Z)+T_{H_{X^\ga}Z^\ga}Y^\ga+H_{Y^\ga}H_{X^\ga}Z^\ga.
  	\end{eqnarray*}Now put
  	\[ Q=-\frac12\Om^\ga([X,Y],Z)+\frac12\Om^\ga(X,\na^M_YZ)-\frac12\Om^\ga(Y,\na^M_XZ)+\frac12\na^{\ga}_{X}\Om^\ga(Y,Z)-\frac12\na^{\ga}_{Y}\Om^\ga(X,Z). \]We have
  	\begin{eqnarray*}
  		2Q&=&-\Om^\ga([X,Y],Z)-\Om^\ga([Y,Z],X)+\Om^\ga(X,\na^M_ZY)-\Om^\ga([Z,X],Y)\\&&+
  		\Om^\ga(\na^M_ZX,Y)+\na^{\ga}_{X}\Om^\ga(Y,Z)+\na^{\ga}_{Y}\Om^\ga(Z,X)\\
  		&\stackrel{\eqref{eq6}}=&-\na^\ga_{Z}\Om^\ga(X,Y)+\Om^\ga(X,\na^M_ZY)+\Om^\ga(\na^M_ZX,Y)\\
  		&=&-\na_Z^{M,\ga}\Om^\ga(X,Y).
  	\end{eqnarray*}Finally,
  	\[ R^{\na^A}(X,Y)Z^\ga=(R^M(X,Y)Z)^\ga+H_{Y^\ga}H_{X^\ga}Z^\ga-H_{X^\ga}H_{Y^\ga}Z^\ga+
  	\left\{T_{H_{X^\ga}Z^\ga}Y^\ga-T_{H_{Y^\ga}Z^\ga}X^\ga -\frac12 \na_Z^{M,\ga}\Om^\ga(X,Y)  \right\}. \]
  	Let compute now $R^{\na^A}(X,Y)U$ for $U\in\Ga(\G)$. Put $R^{\na^A}(X,Y)U=(R^{\na^A}(X,Y)U)^\perp+(R^{\na^A}(X,Y)U)^t$. Since, for any $Z\in\Ga(TM)$, 
  	\[ \langle (R^{\na^A}(X,Y)U)^\perp,Z^\ga\rangle_A=\langle R^{\na^A}(X,Y)U,Z^\ga\rangle_A=-\langle R^{\na^A}(X,Y)Z^\ga,U\rangle_A, \]it suffices to compute $(R^{\na^A}(X,Y)U)^t$. Now
  	\begin{eqnarray*}
  		R^{\na^A}(X,Y)U
  		&=&\na^A_{[X,Y]}U-\na^A_{X}\na^A_{Y}U+\na^A_{Y}\na^A_{X}U\\
  		&=&\na^\ga_{[X,Y]}U+T_{U}[X,Y]^\ga+H_{[X,Y]^\ga}U-\na^A_{X}\na^\ga_{Y}U-\na^A_{X}T_{U}Y^\ga-\na^A_{X}H_{Y^\ga}U\\&&
  		+\na^A_{Y}\na^\ga_{X}U+\na^A_{Y}T_{U}X^\ga+\na^A_{Y}H_{X^\ga}U\\
  		&=&R^{\na^\ga}(X,Y)U+T_{U}[X,Y]^\ga+H_{[X,Y]^\ga}U-T_{\na^\ga_{Y}U}X^\ga-H_{X^\ga}\na^\ga_{Y}U\\&&-\na^\ga_{X}T_{U}Y^\ga-T_{T_{U}Y^\ga}X^\ga-H_{X^\ga}T_{U}Y^\ga
  		-\na^A_{X}H_{Y^\ga}U
  		+T_{\na^\ga_{X}U}Y^\ga+H_{Y^\ga}\na^\ga_{X}U\\&&+\na^\ga_{Y}T_{U}X^\ga+T_{T_{U}X^\ga}Y^\ga+
  		H_{Y^\ga}T_{U}X^\ga+\na^A_{Y}H_{X^\ga}U.
  		\end{eqnarray*}	
  		From \eqref{ec}, we have $(\na^A_{X}H_{Y^\ga}U)^t=H_{X^\ga}H_{Y^\ga}U$ and 
  		$(\na^A_{Y}H_{X^\ga}U)^t=H_{Y^\ga}H_{X^\ga}U$ and hence
  \begin{eqnarray*} (R^{\na^A}(X,Y)U)^t&=&
  R^{\na^\ga}(X,Y)U+H_{Y^\ga}H_{X^\ga}U-H_{X^\ga}H_{Y^\ga}U+T_{U}[X,Y]^\ga\\&&-T_{\na^\ga_{Y}U}X^\ga-\na^\ga_{X}T_{U}Y^\ga-T_{T_{U}Y^\ga}X^\ga
  +T_{\na^\ga_{X}U}Y^\ga+\na^\ga_{Y}T_{U}X^\ga+T_{T_{U}X^\ga}Y^\ga. \end{eqnarray*}		This completes the computation.	
  \end{proof}
  
  \begin{remark} 
  	The expression of $R^{\na^A}$ given above shows that it involves the curvature of $M$, the curvature of the connection $\na^\ga$ on $\G$ and the curvature of the splitting $\ga$. This creates a rich geometric situation. The following proposition enhances this fact.

  \end{remark}

  \begin{pr}\label{v} Let $(A,M,\rho,\prs_A)$ be a transitive Euclidean Lie algebroid and denote by $\ric^M$ and $s^M$, respectively, the Ricci curvature and the scalar curvature of $(M,\prsm)$. Then:
  	\label{fb1}\begin{enumerate}
  		\item[$(i)$]  If $R^{\na^A}=0$ then, for any $X,Y\in\Ga(TM)$,
  		\[ \langle R^M(X,Y)X,Y\rangle_{TM}=\langle H_{X^\ga}Y^\ga,H_{X^\ga}Y^\ga\rangle_A\geq0,\;\; 
  		\ric^M(X,Y)=\langle H_{X^\ga},H_{Y^\ga}\rangle_A\esp s^M=|H|^2 , \]where
  		\[ \langle H_{X^\ga},H_{Y^\ga}\rangle_A=\sum_{i=1}^n\langle H_{X^\ga}{E_i^\ga},H_{Y^\ga}{E_i^\ga}\rangle_A\esp |H|^2=\sum_{i=1}^n\langle H_{E_i^\ga},H_{E_i^\ga}\rangle_A \]and $(E_1,\ldots,E_n)$ is a local orthonormal frame on $M$.
  		In this case the sectional curvature of $(M,\prsm)$ is non-negative and it vanishes if and only if  $R^M=0$.
  		\item[$(ii)$] If $R^{\na^A}=0$ and $T=0$ then $\na^{M,\ga}\Om^\ga=0$, $\na^{M,\ga}(R^{\na^\ga})=0$  and $\na^M(R^M)=0$. In particular,
  		$(M,\prsm)$ is locally symmetric. 
  	\end{enumerate}
  	
  \end{pr}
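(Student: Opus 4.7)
The plan is to start from the curvature formula in Proposition \ref{fb} and split $R^{\na^A}(X,Y)Z^\ga$ into its $\G^\perp$-part
\[
(R^M(X,Y)Z)^\ga + H_{Y^\ga}H_{X^\ga}Z^\ga - H_{X^\ga}H_{Y^\ga}Z^\ga
\]
and its $\G$-part
\[
T_{H_{X^\ga}Z^\ga}Y^\ga - T_{H_{Y^\ga}Z^\ga}X^\ga - \tfrac12 \na_Z^{M,\ga}\Om^\ga(X,Y).
\]
Under the hypothesis $R^{\na^A}=0$, each of these vanishes separately, and the two parts feed respectively into $(i)$ and $(ii)$.

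For part $(i)$, I would pair the $\G^\perp$-part with $W^\ga$ for $W\in\Ga(TM)$ and convert $\langle H_{Y^\ga} H_{X^\ga}Z^\ga, W^\ga\rangle_A$ into $-\langle H_{Y^\ga}W^\ga, H_{X^\ga}Z^\ga\rangle_A$ via Proposition \ref{pr1}$(iii)$, obtaining
\[
\langle R^M(X,Y)Z, W\rangle_{TM} = \langle H_{Y^\ga}W^\ga, H_{X^\ga}Z^\ga\rangle_A - \langle H_{X^\ga}W^\ga, H_{Y^\ga}Z^\ga\rangle_A.
\]
Specializing to $Z=X$, $W=Y$ and using Proposition \ref{pr1}$(ii)$ (so that $H_{X^\ga}X^\ga = -\tfrac12 \Om^\ga(X,X) = 0$ and $H_{X^\ga}Y^\ga = -H_{Y^\ga}X^\ga$ by skew-symmetry of $\Om^\ga$) yields the sectional curvature formula; the Ricci identity follows by tracing against a local orthonormal frame $(E_1,\ldots,E_n)$, and the scalar identity by tracing once more. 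Non-negativity is then immediate. The equivalence between vanishing sectional curvature and $R^M=0$ follows because $K\equiv 0$ forces $H_{X^\ga}Y^\ga=0$ for all $X,Y$, hence $\Om^\ga\equiv 0$, and the displayed formula then returns $R^M=0$.

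For part $(ii)$, the hypothesis $T=0$ kills the first two summands of the $\G$-part, so $\na^{M,\ga}\Om^\ga = 0$ drops out immediately. To deduce $\na^{M,\ga}(R^{\na^\ga}) = 0$, I would differentiate the identity $R^{\na^\ga}(X,Y)U = [\Om^\ga(X,Y), U]_\G$ from \eqref{eq5}; relation \eqref{eq4} says $\na^\ga$ is a derivation of $\br_\G$, and rewriting the $\na^\ga$-derivative as a $\na^{M,\ga}$-derivative on each argument collapses everything to $[\na^{M,\ga}\Om^\ga(X,Y), U]_\G$, which vanishes. Finally, $T=0$ is equivalent, by Proposition \ref{pr1}$(ii)$, to $\na^\ga$ being metric on $(\G,\prs_\G)$, so the formula of part $(i)$ expresses $R^M(X,Y,Z,W)$ as a combination of $\prs_\G$-pairings of values of $\Om^\ga$; expanding $(\na^M_U R^M)(X,Y,Z,W)$ then produces a sum in which every term contains a factor $\na^{M,\ga}_U\Om^\ga = 0$, so $\na^M R^M = 0$ and $(M,\prsm)$ is locally symmetric.

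The main obstacle will be the last bookkeeping step: expanding $(\na^M_U R^M)(X,Y,Z,W)$ and verifying total cancellation requires simultaneously invoking the metric property of $\na^\ga$ (from $T=0$), the Leibniz rule for $\na^{M,\ga}$, and $\na^{M,\ga}\Om^\ga=0$; sign and convention tracking between $R^M$, $R^{\na^\ga}$, $H$ and $T$ must also be done carefully when tracing for the Ricci and scalar identities.
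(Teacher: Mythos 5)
Your proposal is correct and follows essentially the same route as the paper: both parts are extracted by separately annihilating the $\G^\perp$- and $\G$-components of $R^{\na^A}(X,Y)Z^\ga$ from Proposition \ref{fb}, and the identity $\langle R^M(X,Y)Z,W\rangle_{TM}=\langle H_{X^\ga}Z^\ga,H_{Y^\ga}W^\ga\rangle_A-\langle H_{Y^\ga}Z^\ga,H_{X^\ga}W^\ga\rangle_A$ obtained via the skew-symmetry of $H$ is exactly the paper's formula $(*)$, from which the sectional, Ricci and scalar statements follow by the same specialization and tracing. The only variation is the final step of $(ii)$: the paper deduces $\na^M(R^M)=0$ and $\na^{M,\ga}(R^{\na^\ga})=0$ by observing that parallel transport preserves $\Om^\ga$ and (since $T=0$, so $\na^\ga\prs_\G=0$) is an isometry on $\G$, whereas you differentiate $(*)$ and \eqref{eq5} directly via the Leibniz rule and \eqref{eq4} — the infinitesimal form of the same argument, equally valid.
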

  \begin{proof} From Proposition \ref{fb}, if $R^{\na^A}$ vanishes then the vertical and the horizontal part of $R^{\na^A}(X,Y)Z^\ga$ vanish, i.e.,
  	\[ (R^M(X,Y)Z)^\ga+H_{Y^\ga}H_{X^\ga}Z^\ga-H_{X^\ga}H_{Y^\ga}Z^\ga=0 \esp
  	\na_Z^{M,\ga}\Om^\ga(X,Y)=2T_{H_{X^\ga}Z^\ga}Y^\ga-2T_{H_{Y^\ga}Z^\ga}X^\ga. \]
  	Since $H_{X^\ga}$ is skew-symmetric with respect to $\prs_A$, we get 
  	\[ \langle R^M(X,Y)Z,S\rangle_{TM}=\langle H_{X^\ga}Z^\ga,H_{Y^\ga}S^\ga\rangle_\G-
  	\langle H_{Y^\ga}Z^\ga,H_{X^\ga}S^\ga\rangle_\G,\eqno(*) \]and the formulas in part $(i)$ follow. Moreover, if $T=0$ then from the second formula, we get that $\na_Z^{M,\ga}\Om^\ga=0$, i.e., 
  	$\Om^\ga$ is parallel with respect the connections $\na^M$ and $\na^\ga$. This means that if $c$ is a curve in $M$ joining two points $x,y$,  $\tau^M:T_xM\too T_yM$ the parallel transport along $c$ associated to $\na^M$ and $\tau^\ga:\G_x\too \G_y$ the parallel transport along $c$ associated to $\na^\ga$ then
  	\[ \Om^\ga(\tau^MX,\tau^MY)=\tau^\ga\Om^\ga(X,Y). \]
  	Note that since $T=0$, we have from Proposition \ref{pr1} that $\na^\ga(\prs_\G)=0$ and hence $\tau^\ga$ is an isometry.
  	So from $(*)$ we get, since $H_{X^\ga}Y^\ga=-2\Om^\ga(X,Y),$
  	\begin{eqnarray*}
  		\langle R^M(\tau^MX,\tau^MY)\tau^MZ,\tau^MS\rangle_{TM}&=&\langle \tau^\ga H_{X^\ga}Z^\ga, \tau^\ga H_{Y^\ga}S^\ga\rangle_\G-
  		\langle\tau^\ga H_{Y^\ga}Z^\ga,\tau^\ga H_{X^\ga}S^\ga\rangle_\G\\
  		&=&\langle H_{X^\ga}Z^\ga,H_{Y^\ga}S^\ga\rangle_\G-
  		\langle H_{Y^\ga}Z^\ga,H_{X^\ga}S^\ga\rangle_\G\\
  		&=&\langle R^M(X,Y)Z,S\rangle_{TM}.
  	\end{eqnarray*} Thus $(\tau^M)^{-1}R^M(\tau^MX,\tau^MY)\tau^MZ=R^M(X,Y)Z$ which shows that $R^M$ is parallel with respect to $\na^M$. A same argument shows that $\na^{M,\ga}(R^{\na^\ga})=0$ and completes the proof.
  \end{proof}

  The following theorem sum up all what we have seen so far in this section and gives all what one needs to know in order to study natural metrics, in general, and generalized Cheeger-Gromoll metrics, in particular, on transitive Euclidean Lie algebroids.
  
  \begin{theo}\label{main6} Any transitive Euclidean Lie algebroid $A$ can be canonically identified to $TM\oplus\G$ where $(M,\prsm)$ is  Riemannian manifold, $\G$ a  vector bundle of Lie algebras endowed with an Euclidean product $\prs_\G$. The Euclidean product on $A$ is given by $\prsm\oplus \prs_\G$, the anchor by $\mathrm{Id}_{TM}\oplus 0$ and the Lie bracket is given by
  \eqref{eq8}, where $\na$ is a linear connection on $\G$ and $\Om\in\Om^2(M,\G)$ satisfying \eqref{eq4}-\eqref{eq6}. Moreover, let $T$ and $H$ in $\Ga(A^*\otimes A\otimes A)$ given,  for any $X,Y\in\Ga(TM)$ and $U,V\in\Ga(\G)$, by
  \begin{eqnarray*}
  H_U&=&0,\; H_XY=-\frac12\Om(X,Y),\; \langle H_XU,Y\rangle_{TM}=-\langle U,H_XY\rangle_\G,\\
  T_X&=&0,\; \langle T_UV,X\rangle_{TM}=-\frac12\na_{X}(\prs_\G)(U,V),\; \langle T_UX,V\rangle_\G=-\langle X,T_UV\rangle_{TM}.
  \end{eqnarray*}The connexion $\na^A$ on $A$ given by
  \[ \na_X^AY=\na_X^MY+H_XY\esp \na_X^AU=\na_XU+T_UX+H_XU \]preserves the Euclidean product on $A$ and its curvature is given, 
  for any $X,Y,Z\in\Ga(TM)$ and $U\in\Ga(\G)$, by
  \begin{eqnarray*}
  	R^{\na^A}(X,Y)Z&=&\left\{R^M(X,Y)Z+H_{Y}H_{X}Z-H_{X}H_{Y}Z\right\}+\left\{T_{H_{X}Z}Y-T_{H_{Y}Z}X -\frac12 \na_Z^{M,\na}\Om(X,Y) \right\},\\
  	(R^{\na^A}(X,Y)U)^t&=&
  	R^{\na}(X,Y)U+H_{Y}H_{X}U-H_{X}H_{Y}U+T_{U}[X,Y]\\&&-T_{\na_{Y}U}X-\na_{X}T_{U}Y-T_{T_{U}Y}
  	X
  	+T_{\na_{X}U}Y+\na_{Y}T_{U}X+T_{T_{U}X}Y,\\
  	\langle R^{\na^A}(X,Y)U,Z\rangle_A&=&-\langle R^{\na^A}(X,Y)Z,U\rangle_A,
  \end{eqnarray*}where $R^M$ is the curvature of $\na^M$ and $R^{\na}$ is the curvature of $\na$ and
  $$\na_Z^{M,\na}\Om(X,Y)=	\na_{Z}\Om(X,Y)-\Om(X,\na^M_ZY)-\Om(\na^M_ZX,Y).$$
  
  \end{theo}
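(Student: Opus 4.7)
The plan is to assemble Theorem \ref{main6} from the structural and curvature pieces already established earlier in Section \ref{section3}, using the orthogonal splitting of the Atiyah sequence as the identification map.

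First, I would make the canonical identification explicit. Given $(A,M,\rho,\prsa)$, fiberwise orthogonality with respect to $\prsa$ gives $A=\G\oplus\G^\perp$, and $\rho$ restricts to a bundle isomorphism $\G^\perp\to TM$ whose inverse $\ga$ is a splitting of the Atiyah sequence. Transport the Euclidean product on $\G^\perp$ to $TM$ by setting $\langle X,Y\rangle_{TM}=\langle\ga(X),\ga(Y)\rangle_A$, so that $\ga\oplus\mathrm{Id}_\G:TM\oplus\G\to A$ becomes an isometry. Under this isometry the anchor is $\mathrm{Id}_{TM}\oplus0$, and the bracket formula \eqref{eq8} is obtained by expanding $[\ga(X)+U,\ga(Y)+V]_A$ and using the definitions \eqref{eq3} of $\na=\na^\ga$ and $\Om=\Om^\ga$; the compatibility relations \eqref{eq4}--\eqref{eq6} are guaranteed because they were already noted as consequences of the Jacobi identity for $\br_A$.

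Next, I would identify the tensors $T,H$ of the theorem with the O'Neill-type tensors introduced in \eqref{o}. Under the identification $\ga\oplus\mathrm{Id}_\G$, the horizontal part of $A$ is just $TM$, so Proposition \ref{pr1}$(i)$--$(iii)$ translates directly: $H_U=0$ and $T_X=0$ because $H_{a^t}=T_{a^\perp}=0$; the horizontal identity $H_XY=\frac12[\ga(X),\ga(Y)]_A^t=-\frac12\Om(X,Y)$ is exactly Proposition \ref{pr1}$(ii)$; and the formula $\langle T_UV,X\rangle=-\frac12\na_X(\prs_\G)(U,V)$ is the second part of Proposition \ref{pr1}$(ii)$. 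The skew-symmetry relations $\langle H_XU,Y\rangle=-\langle U,H_XY\rangle$ and $\langle T_UX,V\rangle=-\langle X,T_UV\rangle$ are Proposition \ref{pr1}$(iii)$. Then the connection formulas for $\na^A$ collapse to the ones in \eqref{ec} after invoking Proposition \ref{pr1}$(iv)$--$(v)$ together with the definition $\na^A_Xa=\D_{\ga(X)}a$ of \eqref{na}; metric compatibility of $\na^A$ with $\prsm\oplus\prs_\G$ follows because $\D$ is metric with respect to $\prsa$.

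Finally, for the curvature formula I would simply invoke Proposition \ref{fb}, which was derived precisely from the structural relations \eqref{ec}. Under the identification, $X^\ga$ becomes $X$, $Z^\ga$ becomes $Z$, and the formulas of Proposition \ref{fb} read verbatim as those in the statement, with $\na_Z^{M,\ga}\Om^\ga$ becoming $\na_Z^{M,\na}\Om$. The only point requiring a small check is that the tensorial definitions of $T,H$ on $TM\oplus\G$ do extend the original $H^\perp,T^t$ operators to arbitrary sections (horizontal and vertical), but this is immediate from the splitting $a=a^t+a^\perp$ and the vanishing $H_{a^t}=T_{a^\perp}=0$ of Proposition \ref{pr1}$(i)$.

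The main (and in fact only) nontrivial step is the identification of the new $T$ and $H$ with the O'Neill tensors associated to the splitting, since everything else is bookkeeping under the isometry $\ga\oplus\mathrm{Id}_\G$; once that identification is made, the bracket formula comes from \eqref{eq8}, the connection formula from \eqref{ec}, and the curvature formula from Proposition \ref{fb}, with no additional computation required.
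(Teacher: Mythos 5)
Your proposal is correct and follows essentially the same route as the paper: the paper itself presents Theorem \ref{main6} as a summary of Section \ref{section3}, obtained by transporting the structure through the isometry $\ga\oplus\mathrm{Id}_\G$ determined by the orthogonal splitting, identifying $T$ and $H$ with the O'Neill-type tensors of Proposition \ref{pr1}, reading off the connection from \eqref{ec}, and quoting Proposition \ref{fb} for the curvature. Nothing is missing.
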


  \begin{remark}\label{rem2} It is important to notice that the definition of $\na^A$ in Theorem \ref{main6} doesn't involve the Lie bracket on $\Ga(\G)$ and, in the proof of Proposition \ref{fb}, to get the expression of $R^{\na^A}$ we have used only the fact that $d^\na\Om=0$. So if one is not interested about the Lie algebroid bracket, he can build an Euclidean vector bundle with a connection and a parallel  Euclidean metric by considering a Riemannian manifold $(M,\prsm)$, an Euclidean vector bundle $B\too M$ with a connection $\na$ and $\Om\in\Om^2(M,B)$ satisfying $d^\na\Om=0$ and build $A=TM\oplus B$ with the Euclidean product $\prsm\oplus\prs_B$ and the connection $\na^A$ as in Theorem \ref{main6}. The curvature of $\na^A$ is given as in Theorem \ref{main6}. This gives a more general situation where one can study natural metrics.
  	
  \end{remark}

  \section{ Characterization of  Atiyah Euclidean Lie algebroids}\label{section4}

  Atiyah Lie algebroids associated to principal bundles constitute a large class of transitive Lie algebroids and, actually, any integrable transitive Lie algebroid in the sense of being the Lie algebroid of a Lie groupoid is an Atiyah Lie algebroid (see \cite{Li}). When endowed with an Euclidean product these Euclidean Lie algebroids could be build as in Theorem \ref{main6}. We devote this section to give a precise description of Atiyah Euclidean Lie algebroids in the spirit of Theorem \ref{main6}. In particular, we will give a precise description of the Atiyah Lie algebroid associated to the principal bundle of orthonormal frames over a Riemannian manifold and we will show that it carries  a natural family of Euclidean products which make it an ideal candidate for carrying natural metrics. 
  
  One can consult \cite{kubarski} for a detailed treatment of Atiyah Lie algebroids.

  Through this section $P(M,\pir,G)$ is a principal $G$-bundle $\pir : P\too M$. Let start by defining the Atiyah Lie algebroid associated to $P(M,\pir,G)$.

  Let $\Ga(TP)^G$ 	and $\Ga(\mathcal{V}P)^G$  denote, respectively, the $C^\infty(M)$-module of $G$-invariant vector fields on $P$ and its subspace of vertical vector fields. Any vector field in $\Ga(TP)^G$ is $\pir$-projectable on a vector field on $M$ and we have an exact sequence of $C^\infty(M)$-modules
  \begin{equation}\label{sec}
  0\too \Ga(\mathcal{V}P)^G\too \Ga(TP)^G\stackrel{d\pir}\too\Ga(TM)\too 0,
  \end{equation} which is also an exact sequence of real Lie algebras.
  
  Let $\g = Lie(G)$ endowed with the Lie bracket $\br_\g$ obtained from the identification of $\g$ with the space of left invariant vector fields and denote by $R_a :  u\in P\mapsto u.a \in P$ the diffeomorphism that is induced by the right action of $a \in G$ on $P$. Consider $P\times_G\g$ the quotient of $P\times\g$ by the action $a.(u,\kappa)=(u.a,\Ad_{a^{-1}}\kappa)$. This is  a vector bundle over $M$ and we denote by $\pir_0:P\times_G\g\too M$ the natural projection. We identify $\Ga(P\times_G\g)$ with the space $C^\infty(P,\g)^G$ of smooth applications $s:P\too\g$ satisfying
  $s(u.a)=\Ad_{a^{-1}}s(u)$ for any $a\in G$ and any $u\in P$. 
  We define $$V:C^\infty(P,\g)^G\too \Ga(\mathcal{V}P)^G,\, s\mapsto V^s,$$ where $V^s$ is the complete vector field on $P$ whose flow $\phi^s$ is given by $\phi^s(t,u)=u.\exp(-ts(u))$. The map $V$ defines an isomorphism of vector space which is, actually, an isomorphism of Lie algebras. Indeed, we have the following formulas which are part of the folklore:
  \begin{equation}\label{eq23}
  [V^{s_1},V^{s_2}]=V^{[s_1,s_2]_\g},\; [U,V^{s}] =V^{U(s)}\esp V^{s_1}(s_2)=[s_1,s_2]_\g,\; 
  \end{equation}where $s,s_1,s_2\in C^\infty(P,\g)^G, U\in \Ga(TP)^G$.We have chosen a minus sign in the definition of $\phi^s$ in order to avoid a minus sign in the first formula above.

  There exists an unique Lie algebroid, up to an isomorphism, whose the  exact sequence of Lie algebras associated to its Atiyah sequence is isomorphic to \eqref{sec}.
  Indeed,
  over any point $m \in M$, we define an equivalence relation of tangent vectors to $P$. If $\pir(u) = m$, $a \in G$, $X_u \in T_uP$, and $X_{u.a} \in T_{u.a}P$, the vectors $X_u$ and $X_{u.a}$ are said to be equivalent if and only if $X_{u.a} = (T_uR_a)(X_u)$. The equivalence classes of this relation form a vector space $A_m$ isomorphic to $T_uP$, and the disjoint union $A = \bigcup_{m\in M }A_m$ is a vector bundle $\pi_A:A\too M$ of rank $\dim P=\dim M+\dim G$.
  
  Since $\pir \circ R_a = \pir$ , it is clear that the image of a tangent vector $X_u$ by the surjection $T_u\pir : T_uP \too T_mM$ does not depend on the representative $X_u$ of the class $[X_u] \in A_m$. Hence, we get a well-defined surjection $\rho_m : A_m \too T_mM$, as well as a surjective bundle map $\rho : A \too T M$ over the identity. This map will be the anchor of the Atiyah algebroid  associated with the principal bundle $P(M,\pir,G)$. 
  
  The map $\tau: \Ga(TP)^G\too \Ga(A)$, $U\mapsto \tau(U)$, where $\tau(U)(\pir(u))=[U(u)]$ is an isomorphism of $C^\infty(M)$-modules and hence there exists an unique Lie bracket $\br_A$ on $\Ga(A)$ such that $\tau$ is an isomorphism of Lie algebras and $\tau(\Ga(\mathcal{V}P)^G)=\Ga(\G)$ where $\G$ is the adjoint Lie algebroid of $A$.
  We get a transitive Lie  algebroid $(A,M,\rho,\br_A)$ known as the Atiyah algebroid of $P(M,\pir,G)$. 
  
  Let $\ga:TM\too A$ be a splitting of the Atiyah Lie algebroid and consider $\na^\ga$ and $\Om^\ga\in\Om^2(M,\G)$ which are defined by \eqref{eq3}. It defines a $C^\infty(M)$-module homomorphism $\ga:\Ga(TM)\too \Ga(A)$ and hence a splitting $\tau^{-1}\circ\ga:\Ga(TM)\too\Ga(TP)^G$ of \eqref{sec}. Put, for any $U=\tau^{-1}\circ\ga(X)$ and $s\in C^\infty(P,\g)^G$
  \[ \om(U)=0\esp \om(V^s)=-s. \]
  One can check easily that this defines $\om\in\Om^1(P,\g)$ which is $G$-invariant and hence a connection 1-form on $P$. The associated $G$-invariant horizontal distribution is given by $\mathcal{H}TP(u)=\{ \tau^{-1}\circ\ga(X)(u): X\in\Ga(TM)  \}$. 
  For any $X\in\Ga(TM)$, denote by $X^{\om}=\tau^{-1}\circ\ga(X)$ which is the horizontal left of $X$. For any, $X,Y\in\Ga(TM)$, we have
  \begin{eqnarray*}
  d\om(X^{\om},Y^{\om})&=&-\om([X^{\om},Y^{\om}])\\&=&-\om(\tau^{-1}([\ga(X),\ga(Y)]_A))\\
  &\stackrel{\eqref{eq3}}=&\om(\tau^{-1}(\Om^\ga(X,Y)))\\
  &=&-K(X,Y),
  \end{eqnarray*}where $K(X,Y)$ is the unique element of $C^\infty(P,\g)^G$ satisfying $\tau(V^{K(X,Y)})=\Om^\ga(X,Y)$. Moreover, for any $s\in C^\infty(P,\g)^G$ and any $X\in\Ga(TM)$,
  \[ \na^\ga_X\tau(V^s)\stackrel{\eqref{eq3}}=[\ga(X),\tau(V^s)]_A=[\tau(X^{\om}),\tau(V^s)]=
  \tau([X^{\om},V^s])\stackrel{\eqref{eq23}}=\tau(V^{X^{\om}(s)}). \] The splitting $\ga$ defines an identification of $A$ with $TM\oplus\G$ with the Lie bracket given by \eqref{eq8}. But $\G$ can be identified to $P\times_G\g$ by the mean of $\tau^{-1}:\Ga(\G)\too \Ga(\mathrm{V}TP)^G$ and $V^{-1}:\Ga(\mathrm{V}TP)^G\too C^{\infty}(P,\g)^G$. The following result sum up all what we have seen so far.
  \begin{pr}\label{atiyah1}\begin{enumerate}
  		\item There is a correspondence between the splittings of the Atiyah Lie algebroid $A$ and the 1-form connections of $P$.
  	\item 	For any splitting $\ga:TM\too A$ of the Atiyah Lie algebroid of $P(M,\pir,G)$ there exists a connection 1-form $\om\in\Om^1(P,\g)$ such that $A$ is isomorphic to $TM\oplus P\times_G\g$ with the anchor $\mathrm{Id}_{TM}\oplus 0$ and the Lie bracket given by
  		\begin{equation}\label{eq67} [X+s_1,Y+s_2]_A=[X,Y]+\left\{ d\om(X^\om,Y^\om)+X^\om(s_2)-Y^\om(s_1)+[s_1,s_2]_\g      \right\}, \end{equation}where 
  		$X,Y\in\Ga(TM), s_1,s_2\in C^\infty(P,\g)^G$.
 		\item For any 1-from connection $\om$ on $P$, the bracket given by \eqref{eq67} defines a Lie algebroid structure on $TM\oplus P\times_G\g$ which is isomorphic to the Atiyah Lie algebroid of $P(M,\pir,G)$. 
  	\end{enumerate}

  \end{pr}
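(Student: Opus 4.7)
My plan is to reduce all three parts of the proposition to material already developed in the excerpt, using natural bijections and termwise translations rather than any direct Jacobi verification.

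For Part (1), I will argue that the assignments $\ga\mapsto\om$ and $\om\mapsto\ga$ are mutually inverse bijections. Given a splitting $\ga:TM\too A$, the composition $\tau^{-1}\circ\ga$ is a $C^\infty(M)$-linear splitting of the exact sequence \eqref{sec}; its image is a $G$-invariant horizontal distribution $\mathcal{H}TP$ complementary to $\mathcal{V}P$, which is precisely the data of a principal connection on $P$. The corresponding 1-form $\om$ is characterized by $\om(X^\om)=0$ and $\om(V^s)=-s$, as displayed in the discussion preceding the proposition. Conversely, a connection 1-form $\om$ determines $\mathcal{H}TP$, whence a horizontal lift $X\mapsto X^\om$, and composing with $\tau$ gives a splitting of $A$. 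That these two operations are mutual inverses is tautological from the definitions.

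For Part (2), my plan is to start from the canonical identification of Theorem \ref{main6} which already presents $A$ as $TM\oplus\G$ with Lie bracket \eqref{eq8}, and then transport the adjoint bundle $\G$ to $P\times_G\g$ via the isomorphism of Lie algebras $V^{-1}\circ\tau^{-1}:\Ga(\G)\too C^\infty(P,\g)^G$. It then suffices to translate each of the three non-trivial summands on the right-hand side of \eqref{eq8} into the new coordinates. The computation $d\om(X^\om,Y^\om)=-\om([X^\om,Y^\om])=-K(X,Y)$ from the excerpt shows that $\Om^\ga(X,Y)$ corresponds under $V^{-1}\tau^{-1}$ to $K(X,Y)=-d\om(X^\om,Y^\om)$, so the term $\Om^\ga(Y,X)$ in \eqref{eq8} becomes $d\om(X^\om,Y^\om)$. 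The identity $\na^\ga_X\tau(V^s)=\tau(V^{X^\om(s)})$ recorded above the statement translates $\na^\ga_X$ into the directional derivative $X^\om(\cdot)$ along the horizontal lift. Finally, \eqref{eq23} gives $[\tau(V^{s_1}),\tau(V^{s_2})]_A=\tau(V^{[s_1,s_2]_\g})$, so the bracket $[U,V]_\G$ becomes the pointwise bracket $[s_1,s_2]_\g$. Substituting these three translations into \eqref{eq8} reproduces \eqref{eq67} verbatim.

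For Part (3), I would avoid any direct verification of the Jacobi identity and Leibniz rule on $TM\oplus P\times_G\g$. Instead, given any connection 1-form $\om$, Part (1) produces a corresponding splitting $\ga$ of the genuine Atiyah Lie algebroid $A$, and Part (2) shows that the structure \eqref{eq67} built from $\om$ coincides, under the bundle isomorphism $\mathrm{Id}_{TM}\oplus V^{-1}\circ\tau^{-1}$, with the Atiyah bracket expressed in the coordinates associated to $\ga$. Hence \eqref{eq67} automatically defines a Lie algebroid structure isomorphic to $A$. The main obstacle I foresee is careful bookkeeping of sign conventions: the minus sign built into $\om(V^s)=-s$ (itself forced by the convention $\phi^s(t,u)=u.\exp(-ts(u))$), the antisymmetry of $d\om$, and the order of arguments in $\Om^\ga(Y,X)$ appearing in \eqref{eq8} must all be reconciled consistently. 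Once the signs are aligned, the remainder is mechanical.
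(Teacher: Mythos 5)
Your proposal is correct and follows essentially the same route as the paper: the paper offers no separate proof but states that the proposition ``sums up'' the preceding discussion, which consists precisely of the splitting--connection correspondence via $\tau^{-1}\circ\ga$, the termwise translation of the bracket \eqref{eq8} using $d\om(X^\om,Y^\om)=-K(X,Y)$, $\na^\ga_X\tau(V^s)=\tau(V^{X^\om(s)})$ and \eqref{eq23}, and the observation that every connection $1$-form arises from a splitting. Your sign bookkeeping (in particular $\Om^\ga(Y,X)\mapsto d\om(X^\om,Y^\om)$) matches the paper's conventions.
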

  
  Since any Euclidean product on $A$ comes with a splitting and an Euclidean product on $\G$. When we identify $\G$ to $P\times_G\g$, we get also an Euclidean product on $P\times_G\g$ which is entirely determined by smooth map $h:P\too\otimes^2\g^*$, $u\mapsto h_u$ with $h_u$ is an Euclidean product on $\g$ satisfying
  $h_{u.a}=\Ad_{a^{-1}}h_u$. So, we get the following corollary.

  \begin{co}\label{atiyah} Let $\prs_A$ be an Euclidean product on the Atiyah Lie algebroid $A$ associated to $P(M,\pir,G)$. Then there exists a connection one-form $\om\in \Om^1(P,\g)$, a Riemannian metric $\prsm$ on $M$, an Euclidean product $h:P\too\otimes^2\g^*$ such that $(A,\prs_A)$ is canonically isomorphic as a  transitive Euclidean Lie algebroid to $TM\oplus (P\times_G\g)$ with the anchor $\mathrm{Id}_{TM}\oplus 0$,  the Lie bracket given by \eqref{eq67} and the Euclidean product given
  	\[ \langle X+s_1,Y+s_2\rangle_A(\pir(u))=\langle X,Y\rangle_{TM}(\pir(u))+h_u(s_1(u),s_2(u)),\quad X,Y\in\Ga(TM), s_1,s_2\in C^\infty(P,\g)^G. \]
  	Moreover, the tensor $H$ and $T$ defined in Theorem \ref{main6} are given by $H_XY=\frac12d\om(X^\om,Y^\om)$ and $\langle T_{s_1}s_2,X\rangle_{TM}=-\frac12\mathcal{L}_{X^\om}h(s_1,s_2),$ where
  	\[ \mathcal{L}_{X^\om}h(s_1,s_2)=X^\om.h(s_1,s_2)-h(X^\om(s_1),s_2)-h(s_1,X^\om(s_2)). \]
  	Conversely, any Riemannian metric on $M$, any connection 1-form on $P$ and any Euclidean product on $P\times_G\g$ define a transitive Euclidean Lie algebroid structure on $TM\oplus P\times_G\g$ as above.
  \end{co}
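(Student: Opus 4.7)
The plan is to extract the triple $(\om, \prsm, h)$ from $\prsa$ by combining the canonical orthogonal splitting available on any Euclidean transitive Lie algebroid (Section~\ref{section3}) with the splitting-to-connection dictionary of Proposition~\ref{atiyah1}. First I form the decomposition $A=\G\oplus\G^\perp$ and take $\ga:TM\to\G^\perp$ to be the inverse of $\rho|_{\G^\perp}$; this is a splitting of the Atiyah sequence. By Proposition~\ref{atiyah1}(1), $\ga$ corresponds to a unique connection $1$-form $\om\in\Om^1(P,\g)$ whose horizontal lift $X^\om$ satisfies $\tau(X^\om)=\ga(X)$ for every $X\in\Ga(TM)$. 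Proposition~\ref{atiyah1}(2) then identifies $A$ with $TM\oplus(P\times_G\g)$, with anchor $\mathrm{Id}_{TM}\oplus 0$ and bracket \eqref{eq67}. I put $\langle X,Y\rangle_{TM}:=\langle\ga(X),\ga(Y)\rangle_A$, so $\ga$ becomes an isometry onto $\G^\perp$, and I transport the restriction of $\prsa$ to $\G$ through $\tau^{-1}\circ V^{-1}$ to obtain a $G$-equivariant smooth map $h:P\to\otimes^2\g^*$ satisfying $h_{u.a}=\Ad_{a^{-1}}^*h_u$. This realizes $\prsa$ in the form $\prsm\oplus h$ claimed in the statement.

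For the formulas of $H$ and $T$ I would invoke Proposition~\ref{pr1}(ii). The identity $H_{X^\ga}Y^\ga=-\tfrac12\Om^\ga(X,Y)$, together with the calculation preceding Proposition~\ref{atiyah1} which shows $\Om^\ga(X,Y)=\tau(V^{-d\om(X^\om,Y^\om)})$, gives $H_XY=\tfrac12 d\om(X^\om,Y^\om)$ after identifying $\G$ with $P\times_G\g$. For $T$ the identity $\langle T_UV,\ga(X)\rangle_A=-\tfrac12\na^\ga_X(\prs_\G)(U,V)$ is unpacked as follows: writing $U=\tau(V^{s_1})$ and $V=\tau(V^{s_2})$, the pairing $\langle U,V\rangle_\G\circ\pir$ is exactly the function $h(s_1,s_2):P\to\R$, while $\na^\ga_X\tau(V^{s_i})=\tau(V^{X^\om(s_i)})$ by the relation derived just above Proposition~\ref{atiyah1}. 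Evaluating $\na^\ga_X(\prs_\G)(U,V)$ at $\pir(u)$ then produces $X^\om.h(s_1,s_2)-h(X^\om(s_1),s_2)-h(s_1,X^\om(s_2))=\mathcal{L}_{X^\om}h(s_1,s_2)$, which yields the claimed formula for $T$.

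The converse is immediate from Proposition~\ref{atiyah1}(3): given $(\prsm,\om,h)$, formula \eqref{eq67} equips $TM\oplus(P\times_G\g)$ with an Atiyah Lie algebroid structure, the direct sum $\prsm\oplus h$ is an Euclidean product on it, and the summands are tautologically orthogonal so the induced canonical splitting $\ga=\mathrm{Id}_{TM}\oplus 0$ reproduces $\om$. I do not foresee a substantive obstacle; the only slightly delicate bookkeeping is the translation between $\Om^\ga$, defined via the Lie bracket on $\Ga(A)$, and $d\om$, defined via horizontal lifts on $P$, but this is exactly the Maurer--Cartan-type identity $[X^\om,Y^\om]=[X,Y]^\om-V^{d\om(X^\om,Y^\om)}$ already used in deriving \eqref{eq67} in the discussion preceding Proposition~\ref{atiyah1}.
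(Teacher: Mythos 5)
Your argument is correct and follows the same route as the paper: the paper derives this corollary directly from the orthogonal splitting $A=\G\oplus\G^\perp$, the splitting--connection correspondence of Proposition~\ref{atiyah1}, and the identities $\tau(V^{K(X,Y)})=\Om^\ga(X,Y)$ with $K(X,Y)=-d\om(X^\om,Y^\om)$ and $\na^\ga_X\tau(V^s)=\tau(V^{X^\om(s)})$ established just before that proposition, combined with Proposition~\ref{pr1}(ii) for $H$ and $T$. Your write-up is in fact more explicit than the paper's, which states the corollary as an immediate consequence of that discussion without a separate proof.
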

  
 As an application of Proposition \ref{atiyah1} and Corollary \ref{atiyah}, we describe now the Atiyah Lie algebroid of the $\mathrm{O}(n)$-principal bundle of orthonormal frames over a Riemannian manifold and we endow it with a family depending on one parameter of Euclidean product.
 
 \begin{theo} \label{on} Let $(M,\prsm)$ be a Riemannian manifold, $O(TM)$ the $\mathrm{O}(n)$-principal bundle of orthonormal frames over $M$ and $\mathrm{so}(TM)=\bigcup_{m\in M}\mathrm{so}(T_mM)$ where $\mathrm{so}(T_mM)$ is the Lie algebra of skew-symmetric endomorphisms of $T_mM$. Then the Atiyah Lie algebroid of $O(TM)$ is canonically isomorphic to $TM\oplus \mathrm{so}(TM)$ with anchor $\mathrm{Id}_{TM}\oplus 0$ and the Lie bracket given by \[ [X+F,Y+G]_A=[X,Y]+\left\{ \na_X^M(G)-\na_Y^M(F)+[F,G]-R^M(X,Y)\right\}, \]
 	where $F,G\in\Ga(\mathrm{so}(TM))$,  $X,Y\in \Ga(TM)$,  $R^M$ is the curvature of $\na^M$, a section of $\mathrm{so}(TM)$ is seen as a skew-symmetric bundle homomorphism and $[F,G]=F\circ G-G\circ F$.
 	
 \end{theo}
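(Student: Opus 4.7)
The plan is to apply Proposition \ref{atiyah1} and Corollary \ref{atiyah} to the principal bundle $P=O(TM)$, using the Levi-Civita connection of $(M,\prsm)$ as the distinguished connection 1-form, and then translate the abstract bracket formula \eqref{eq67} through the canonical identifications.

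First I would identify the adjoint vector bundle $O(TM)\times_{\mathrm{O}(n)}\mathrm{o}(n)$ with $\mathrm{so}(TM)$ via the map $[u,F]\mapsto u\circ F\circ u^{-1}$, where an orthonormal frame $u$ at $m\in M$ is viewed as a linear isometry $u:\R^n\too T_mM$. This identification is well defined (equivariance $[u\cdot a,\Ad_{a^{-1}}F]\mapsto$ same endomorphism is immediate) and it yields a canonical bijection between $C^\infty(O(TM),\mathrm{o}(n))^{\mathrm{O}(n)}$ and $\Ga(\mathrm{so}(TM))$. Under this bijection, the pointwise bracket $[s_1,s_2]_\g$ on equivariant functions corresponds to the pointwise commutator $[F,G]=F\circ G-G\circ F$ of skew-symmetric endomorphisms.

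Next, the Levi-Civita connection $\na^M$ defines a canonical principal connection 1-form $\om\in\Om^1(O(TM),\mathrm{o}(n))$ whose horizontal distribution is spanned by the tangent vectors to curves of parallel orthonormal frames; equivalently, for $X\in\Ga(TM)$, the horizontal lift $X^\om$ at $u\in O(TM)$ is the initial velocity of $t\mapsto \tau_t^M u$, where $\tau_t^M$ denotes parallel transport along an integral curve of $X$. By Proposition \ref{atiyah1}, the Atiyah Lie algebroid of $O(TM)$ is isomorphic to $TM\oplus(O(TM)\times_{\mathrm{O}(n)}\mathrm{o}(n))\simeq TM\oplus\mathrm{so}(TM)$ with anchor $\mathrm{Id}_{TM}\oplus 0$ and bracket given by \eqref{eq67}. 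It therefore only remains to compute each summand of \eqref{eq67} through the identification of the previous paragraph.

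For this I would use the standard dictionary between principal and associated connections: if $s\in C^\infty(O(TM),\mathrm{o}(n))^{\mathrm{O}(n)}$ corresponds to $F\in\Ga(\mathrm{so}(TM))$, then $X^\om(s)$ corresponds to $\na^M_XF$ (viewing $F$ as a tensor and differentiating covariantly). For the curvature term, since $X^\om$ and $Y^\om$ are horizontal, $\om(X^\om)=\om(Y^\om)=0$, whence $d\om(X^\om,Y^\om)=-\om([X^\om,Y^\om])$; the Cartan structure equation then gives $d\om(X^\om,Y^\om)=\Om(X^\om,Y^\om)$, where $\Om$ is the curvature 2-form of $\om$. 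Under the isomorphism $O(TM)\times_{\mathrm{O}(n)}\mathrm{o}(n)\simeq\mathrm{so}(TM)$, the curvature 2-form of the Levi-Civita principal connection corresponds to $-R^M(X,Y)$ in the paper's sign convention $R^M(X,Y)=\na^M_{[X,Y]}-\na^M_X\na^M_Y+\na^M_Y\na^M_X$. Substituting these three identifications into \eqref{eq67} produces exactly the claimed formula.

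The main obstacle is bookkeeping signs: the paper's curvature convention is the negative of the most common one, the fundamental vector field $V^s$ is defined with a minus sign in its flow, and \eqref{eq67} differs from \eqref{eq8} by swapping the order of the arguments in $\Om$. These three sign choices must be tracked simultaneously to see that they combine to give $-R^M(X,Y)$ (and not $+R^M(X,Y)$) in the final formula. Once the conventions are fixed, each individual correspondence ($X^\om(s)\leftrightarrow\na^M_XF$, the structure-equation computation, and $[s_1,s_2]_\g\leftrightarrow [F,G]$) is a routine verification from the theory of associated bundles.
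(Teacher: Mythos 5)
Your proposal is correct and follows essentially the same route as the paper: apply Proposition \ref{atiyah1} with the principal connection induced by $\na^M$, identify $O(TM)\times_{\mathrm{O}(n)}\mathrm{so}(n)$ with $\mathrm{so}(TM)$ via $[u,F]\mapsto u\circ F\circ u^{-1}$, and translate each term of \eqref{eq67}, including the check that $X^\om(s_F)$ corresponds to $\na^M_XF$. The only divergence is in the curvature term, where you invoke the Cartan structure equation and the standard correspondence between the curvature form of the Levi-Civita principal connection and $R^M$, whereas the paper derives $\om([X^\om,Y^\om])=s_{R^M(X,Y)}$ directly by computing the adjoint action on equivariant functions and using that $\mathrm{so}(n)$ has trivial center; both are valid, and your sign bookkeeping is consistent with the paper's conventions.
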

 \begin{proof}
Recall that $O(TM)$ over $M$ consisting of $(m,z)$ such that $z:\R^n\too T_mM$ is an isometry where $\R^n$ is induced with its canonical Euclidean metric,  $T_mM$ with $\prsm^m$ and $\pir:O(TM)\too M$, $(m,z)\mapsto m$. The Levi-Civita connection $\na^M$ of $M$ defines a connection 1-form $\om$ on $O(TM)$ which can be described as follows. For any $X\in\Ga(TM)$ and $s\in C^{\infty}(O(TM),\mathrm{so}(n))^{O(n)}$
\[ \om(X^\om)=0\esp \om(V^s)=-s, \]where 
 $X^\om$  is the vector field on $O(TM)$ given by
  \[ X^\om(m,z)=\frac{d}{dt}_{|t=0}\tau^{0,t}\circ z \]and $\tau^{0,t}:T_mM\too T_{\phi^X(m,t)}M$ is the parallel transport along the curve $s\too \phi^X(s,m)$ and $\phi^X$ is the flow of $X$. According to Proposition \ref{atiyah1}, $\om$ defines a Lie algebroid structure on $TM\oplus O(TM)\times_{\mathrm{O}(n)}\mathrm{so}(n)$ whose Lie bracket is given by \eqref{eq67}. Now $O(TM)\times_{\mathrm{O}(n)}\mathrm{so}(n)$ has a natural identification with $\mathrm{so}(TM)$  via $[(m,z),A]\mapsto z\circ A\circ z^{-1}$. Hence $TM\oplus \mathrm{so}(TM)$ carries a structure of Lie algebroid isomorphic to the Atiyah Lie algebroid of $O(TM)$ with anchor $\mathrm{Id}_{TM}\oplus 0$. Let compute the Lie bracket obtained from \eqref{eq67} when we identify $O(TM)\times_{\mathrm{O}(n)}\mathrm{so}(n)$ with $\mathrm{so}(TM)$. At the level of the space of sections the identification is given by
  \[ \Ga(\mathrm{so}(TM))\too C^{\infty}(O(TM),\mathrm{so}(n))^{O(n)},\quad F\mapsto s_F(m,z)=z^{-1}\circ F_m\circ z.  \]
  We have, for any $X\in\Ga(TM)$,
  \begin{eqnarray*}
  	X^\om(s_F)(m,z)&=&\frac{d}{dt}_{|t=0}s_F(\phi_t^X(m),\tau^{0,t}\circ z)\\
  	&=&\frac{d}{dt}_{|t=0}z^{-1}\circ (\tau^{0,t})^{-1}\circ F_{\phi_t^X(m)}\circ \tau^{0,t}\circ z\\
  	&=&z^{-1}\circ \na^M_{X(m)}(F)\circ z.
  \end{eqnarray*}On the other hand, we have, for any $X,Y\in\Ga(TM)$,
  \[ [X^\om,Y^\om]-[X,Y]^\om=-V^{\om([X^\om,Y^\om])}. \]
  By using \eqref{eq23} and the expression of $X^\om(s_F)$ obtained above, we get for any $F\in\Ga(\mathrm{so}(TM))$,
  \begin{eqnarray*}
  \;[s_F,\om([X^\om,Y^\om])]_{\mathrm{so}(n)}(m,z)&=&-V^{\om([X^\om,Y^\om])}(s_F)(m,z)\\
  &=&z^{-1}\circ\left( R^M_{X,Y}(F)  \right)\circ z,
  \end{eqnarray*}where $R^M_{X,Y}$ is the curvature of $\na^M$ as a connection on the vector bundle $\mathrm{so}(TM)$. Or $R^M_{X,Y}(F)=[R^M(X,Y),F]$. Thus
  \[ [s_F,\om([X^\om,Y^\om])]_{\mathrm{so}(n)}=[s_{R^M(X,Y)},s_F]_{\mathrm{so}(n)} \] and since the center of $\mathrm{so}(n)$ is trivial we get $\om([X^\om,Y^\om])=s_{R^M(X,Y)}$ which completes the proof.
  \end{proof}
  
  Since the vector bundle $\mathrm{so}(TM)$ has a natural Euclidean product obtained by considering the Killing form on each fiber, we get the following class of transitive Euclidean Lie algebroids associated naturally to any Riemannian manifold.
  
  \begin{Def}\label{def} Let $(M,\prsm)$ be a Riemannian manifold and $k>0$. We denote by $AO(M,k)$ the transitive Euclidean Lie algebroid $TM\oplus \mathrm{so}(TM)$ obtained in Theorem \ref{on} and endowed with the Euclidean product
  	\[ \langle X+F,Y+G\rangle_k=\langle X,Y\rangle_{TM}-k\tr(F\circ G). \]
  	We call $AO(M,k)$ the $k$-Atiyah Euclidean Lie algebroid of $(M,\prsm)$. Note that for $AO(M,k)$ we have $T=0$ and
  	\[ H_XY=-\frac12R^M(X,Y)\esp \langle H_XF,Y\rangle_{TM}=-\frac12k\tr(F\circ R^M(X,Y)). \]
  \end{Def}

  \section{Generalized Cheeger-Gromoll metrics on the  $k$-Atiyah Euclidean Lie algebroid over a space form}\label{section5}	
  
  This section is mainly devoted to give a proof of Theorem \ref{main5}. We consider a Riemannian manifold $(M,\prsm)$ of dimension $n$ and $AO(M,k)$ its $k$-Atiyah Euclidean Lie algebroid defined in Definition \ref{def}. The connection $\na^A$ on the Euclidean Lie algebroid $AO(M,k)$ defined in \eqref{na} is given by virtue of Theorem \ref{main6}, for any $X,Y\in\Ga(TM)$ and $F\in\Ga(\mathrm{so}(TM))$, by
  \[ \na^A_XY=\na^M_XY-\frac12R^M(X,Y)\esp \na^A_XF=\na_X^M(F)+H_XF, \langle H_XF,Y\rangle_{TM}=-\frac12k\tr(F\circ R^M(X,Y)). \]
  As in Section \ref{section2}, we endow $AO(M,k)$ with the family of generalized Cheeger-Gromoll metrics $h_{p,q}$ thanks to $\na^A$ (see \eqref{cg}). Recall that the O'Neill shape tensor of the Riemannian submersion $\pi:(AO(M,k),h_{p,q})\too (M,\prsm)$ is given by
  \[ B_{X^h}Y^h(a)=\frac12(R^{\na^A}(X,Y)a)^v, \]where $X^h$ is the vector field on $AO(M,k)$ the horizontal left of $X$.  Put
  \[ |B|^2=\sum_{i}h_{p,q}(B_{X_i^h},B_{X_i^h})\esp h_{p,q}(B_{X_i^h},B_{X_i^h})=\sum_{j\not=i}h_{p,q}(B_{X_i^h}X_j^h,B_{X_i^h}X_j^h), \]where $(X_i)_{i=1}^n$ is any local orthonormal frame on $M$.
  	
  For any $X,Y\in\Ga(TM)$, $X\wedge Y$ is the skew-symmetric endomorphism of $TM$ given by
  \[ X\wedge Y(Z)=\langle Y,Z\rangle_{TM} X-\langle X,Z\rangle_{TM} Y. \]
  
  \begin{pr}\label{pr3} Suppose that $(M,\prsm)$ has constant sectional curvature $c$ and put $\varpi=\frac14c(2-ck)$.  Then, for any $X,Y\in\Ga(TM)$ and $F\in\Ga(\mathrm{so}(TM))$,
  	\begin{enumerate}
  		\item 
  		$ R^{\na^A}(X,Y)Z=-2\varpi X\wedge Y(Z)\esp R^{\na^A}(X,Y)F=-2\varpi [X\wedge Y,F] ,$
  		
  		\item $|B|^2(Z+F)=2\varpi^{2}\om^{p}\left(  (n-1)|Z|^2+2(n-2)|F|^2    \right)$, where $|F|^2=-k\tr(F^2)$.
  	\end{enumerate}

  \end{pr}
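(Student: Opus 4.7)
The plan is to feed the data of the Atiyah Euclidean Lie algebroid $AO(M,k)$ from Definition \ref{def} into the general curvature formula of Theorem \ref{main6}, exploiting the two drastic simplifications available in the space form case. First, $T=0$ by Definition \ref{def}. Second, since $\Omega(X,Y)=R^M(X,Y)=-c(X\wedge Y)$ (under the paper's curvature sign convention, with $(X\wedge Y)(Z)=\langle Y,Z\rangle X-\langle X,Z\rangle Y$), and since $\nabla^M$ annihilates $X\wedge Y$ by the Leibniz rule applied to $X\otimes Y^\flat - Y\otimes X^\flat$, we get $\nabla_Z^{M,\nabla}\Omega=0$. Thus Theorem \ref{main6} collapses to
\[ R^{\nabla^A}(X,Y)Z=R^M(X,Y)Z+H_YH_XZ-H_XH_YZ,\quad (R^{\nabla^A}(X,Y)F)^t=R^\nabla(X,Y)F+H_YH_XF-H_XH_YF. \]

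To finish part (1), I first establish the auxiliary identity $H_XF=ckF(X)$ for $F\in\Gamma(\mathrm{so}(TM))$. This comes from Definition \ref{def}: $\langle H_XF,W\rangle_{TM}=-\tfrac{k}{2}\tr(F\circ R^M(X,W))=\tfrac{ck}{2}\tr(F(X\wedge W))$, and a quick check gives $\tr(F(X\wedge W))=2\langle F(X),W\rangle$ for any skew $F$. Starting from $H_XZ=\tfrac{c}{2}(X\wedge Z)$, iterating this produces $H_YH_XZ=\tfrac{c^2k}{2}(X\wedge Z)(Y)$; antisymmetrization gives $H_YH_XZ-H_XH_YZ=\tfrac{c^2k}{2}(X\wedge Y)(Z)$, which combined with $R^M(X,Y)Z=-c(X\wedge Y)(Z)$ produces the stated coefficient $-c+\tfrac{c^2k}{2}=-2\varpi$. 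For the vertical piece, I use the standard identity $R^\nabla(X,Y)F=[R^M(X,Y),F]=-c[X\wedge Y,F]$ for the Levi-Civita connection on the endomorphism bundle, and verify directly that $Y\wedge F(X)-X\wedge F(Y)=[X\wedge Y,F]$, so that $H_YH_XF-H_XH_YF=\tfrac{c^2k}{2}[X\wedge Y,F]$. Summing produces $-2\varpi[X\wedge Y,F]$. The horizontal part of $R^{\nabla^A}(X,Y)F$ vanishes thanks to the duality $\langle R^{\nabla^A}(X,Y)F,Z\rangle_A=-\langle R^{\nabla^A}(X,Y)Z,F\rangle_A$ from Theorem \ref{main6}, because Step 1 places $R^{\nabla^A}(X,Y)Z$ in the horizontal subspace $TM$ which is $\prs_k$-orthogonal to $\mathrm{so}(TM)$.

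For part (2), I substitute the two formulas of part (1) into $B_{X_i^h}X_j^h(a)=\tfrac12(R^{\nabla^A}(X_i,X_j)a)^v$ and unpack the vertical metric from \eqref{cg}. A key observation is that $\langle R^{\nabla^A}(X,Y)a,a\rangle_k=0$ (because $\nabla^A$ preserves $\prs_k$, so $R^{\nabla^A}(X,Y)$ is skew), which forces the $q$-dependent term in $h_{p,q}$ to vanish and leaves a clean $\omega^p$ factor. The problem then reduces to the two orthonormal-frame sums
\[ \sum_{i\neq j}|(X_i\wedge X_j)(Z)|^2_{TM}=2(n-1)|Z|^2 \quad\text{and}\quad \sum_{i\neq j}|[X_i\wedge X_j,F]|_k^2=4(n-2)|F|^2. \]
The first is a one-line expansion $(X_i\wedge X_j)(Z)=\langle X_j,Z\rangle X_i-\langle X_i,Z\rangle X_j$. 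The second — which is the main obstacle — is the quadratic Casimir identity for the adjoint $\mathfrak{so}(n)$-action. I would handle it by $\mathrm{O}(n)$-invariance: both sides are $\mathrm{O}(n)$-equivariant quadratic forms in $F$, so it suffices to verify it on $F=X_1\wedge X_2$, for which the standard bracket formula $[X_a\wedge X_b,X_c\wedge X_d]=\delta_{bc}X_a\wedge X_d-\delta_{ac}X_b\wedge X_d-\delta_{bd}X_a\wedge X_c+\delta_{ad}X_b\wedge X_c$ shows the only nonzero contributors are the pairs where exactly one of $i,j$ lies in $\{1,2\}$, yielding $4(n-2)$ ordered pairs each contributing $|X_a\wedge X_b|^2_k=2k=|F|^2$, for a total of $4(n-2)|F|^2$. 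Assembling gives the claimed $|B|^2(Z+F)=2\varpi^2\omega^p((n-1)|Z|^2+2(n-2)|F|^2)$.
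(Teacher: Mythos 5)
Your argument is correct and follows essentially the same route as the paper: reduce via Theorem \ref{main6} using $T=0$ and $\na^{M,\na}\Om=0$, establish the key identity $H_XF=ckF(X)$, and assemble the coefficient $-c+\tfrac{c^2k}{2}=-2\varpi$, then compute $|B|^2$ after killing the $q$-term by skew-symmetry of $R^{\na^A}(X,Y)$. The only real divergence is in evaluating $\sum_{i\neq j}\lvert[X_i\wedge X_j,F]\rvert_k^2=4(n-2)|F|^2$: the paper expands $\sum_i\lvert[X\wedge Y,F](X_i)\rvert^2$ directly for arbitrary $F$, whereas you invoke $\mathrm{O}(n)$-invariance to test only on $F=X_1\wedge X_2$ — a valid (and slicker) shortcut, though it tacitly uses that the space of $\mathrm{O}(n)$-invariant quadratic forms on $\mathrm{so}(n)$ is one-dimensional.
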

  
  \begin{proof}\begin{enumerate} \item We have $H_XY=-\frac12R^M(X,Y)=\frac12cX\wedge Y$. Moreover, since the curvature is constant then $\na^M(R^M)=0$ which implies that $\Om$ is also parallel and hence, according to Theorem \ref{main6},
  		$$	R^{\na^A}(X,Y)Z=R^M(X,Y)Z+H_{Y}H_{X}Z-H_{X}H_{Y}Z\esp 
  			R^{\na^A}(X,Y)F=
  			[R^{M}(X,Y),F]+H_{Y}H_{X}F-H_{X}H_{Y}F.$$

  		Now if $(X_i)_{i=1}^n$ is local frame of orthonormal vector fields then
  		\begin{eqnarray*} 
  			\langle H_XF,Y\rangle_{TM}&=&-\frac12k\tr( F\circ R^M(X,Y))
  			=-\frac12ck\sum_{i=1}^n\langle F(X_i),X\wedge Y(X_i)\rangle_{TM} \\
  			&=&-\frac12ck\sum_{i=1}^n\left(\langle Y,X_i\rangle_{TM}\langle F(X_i),X\rangle_{TM}-\langle X,X_i\rangle_{TM}\langle F(X_i),Y\rangle_{TM} \right)\\
  			&=&-ck\langle F(Y),X\rangle_{TM}.
  		\end{eqnarray*}Thus $H_XF=ckF(X)$. So
  		\begin{eqnarray*}
  			\;[H_Y,H_X]Z&=&\frac12(H_YR^M(Z,X)+H_XR^M(Y,Z))\\&=&\frac12ck(R^M(Z,X)Y+R^M(Y,Z)X)\\
  			&=&-\frac12ckR^M(X,Y)Z.
  		\end{eqnarray*}
  		Thus
  		\[ R^{\na^A}(X,Y)Z=\frac12(2-ck) R^M(X,Y)Z=-\frac12c(2-ck)X\wedge Y(Z).\]
  		On the other hand,
  		\begin{eqnarray*}
  			\;[H_Y,H_X]F&=&ck(H_YF(X)-H_XF(Y))\\
  			&=&-\frac12ck(R^M(Y,F(X))+R^M(F(Y),X)),\\
  			&=&-\frac12c^2k([F,X\wedge Y]).
  		\end{eqnarray*}This completes the proof of the first part.

  		\item	We have, for $a=Z+F$,
  		\begin{eqnarray*}
  			h_{p,q}(B_{X^h}Y^h,B_{X^h}Y^h)(a)&=&\frac14h_{p,q}((R^{\na^A}(X,Y)a)^v,(R^{\na^A}(X,Y)a)^v)\\
  			&=&\frac14\om^p\langle R^{\na^A}(X,Y)a,R^{\na^A}(X,Y)a\rangle_A\\
  			&=&\frac14\om^p\left( \langle R^{\na^A}(X,Y)Z,R^{\na^A}(X,Y)Z\rangle_{TM}-k\tr(R^{\na^A}(X,Y)F\circ R^{\na^A}(X,Y)F)     \right).
  		\end{eqnarray*}Let pursue
  		\begin{eqnarray*}
  			\langle R^{\na^A}(X,Y)Z,R^{\na^A}(X,Y)Z\rangle_{TM}&=&4\varpi^2\langle X\wedge Y(Z),X\wedge Y(Z)\rangle_{TM}\\
  			&=&4\varpi^2| \langle Y,Z\rangle_{TM}X-\langle X,Z\rangle_{TM}Y|^2\\
  			&=&4\varpi^2(\langle Y,Z\rangle_{TM}^2|X|^2+\langle X,Z\rangle_{TM}^2|Y|^2-2\langle Y,Z\rangle_{TM}\langle X,Z\rangle_{TM}\langle X,Y\rangle_{TM}).
  		\end{eqnarray*}On the other hand,
  		\begin{eqnarray*}
  			[X\wedge Y,F](X_i)&=&\langle F(X_i),Y\rangle_{TM}X-\langle F(X_i),X\rangle_{TM}Y-\langle Y,X_i\rangle_{TM}F(X)+\langle X,X_i\rangle_{TM}F(Y).
  		\end{eqnarray*}Thus
  		
  		\begin{eqnarray*}
  			|[X\wedge Y,F](X_i)|^2&=&\langle F(X_i),Y\rangle_{TM}^2|X|^2+\langle F(X_i),X\rangle_{TM}^2|Y|^2+\langle Y,X_i\rangle_{TM}^2|F(X)|^2+\langle X,X_i\rangle_{TM}^2|F(Y)|^2\\
  			&&-2\langle F(X_i),Y\rangle_{TM}\langle F(X_i),X\rangle_{TM}\langle X,Y\rangle +2\langle F(X_i),Y\rangle_{TM}\langle X,X_i\rangle \langle X,F(Y)\rangle_{TM}\\
  			&&+2\langle F(X_i),X\rangle_{TM}\langle Y,X_i\rangle_{TM}\langle Y,F(X)\rangle_{TM}-2\langle Y,X_i\rangle_{TM}\langle X,X_i\rangle_{TM}\langle F(X),F(Y)\rangle_{TM}\\
  			\sum|[X\wedge Y,F](X_i)|^2&=&2|F(Y)|^2|X|^2+2|F(X)|^2|Y|^2-4\langle X,Y\rangle_{TM}\langle F(X),F(Y)\rangle_{TM}-4\langle F(X),Y\rangle_{TM}^2\\
  		\end{eqnarray*}So if $|X|=|Y|=1$ and $\langle X,Y\rangle_{TM}=0$ we get
  		\begin{eqnarray*}
  			\varpi^{-2}\om^{-p}h_{p,q}(B_{X^h}Y^h,B_{X^h}Y^h)(a)&=&\langle Y,Z\rangle_{TM}^2+\langle X,Z\rangle_{TM}^2+2k(|F(Y)|^2+|F(X)|^2-2\langle F(X),Y\rangle_{TM}^2).
  		\end{eqnarray*}
  			We have
  		\[ |B|^2=\sum_{i}h_{p,q}(B_{X_i},B_{X_i}). \]
  		Now
  		\begin{eqnarray*}
  			h_{p,q}(B_{X_i},B_{X_i})&=&\sum_{j\not=i}h_{p,q}(B_{X_i}X_j,B_{X_i}X_j)\\
  			&=&\varpi^{2}\om^{p}\sum_{j\not=i}\left(   \langle X_j,Z\rangle_{TM}^2+\langle X_i,Z\rangle_{TM}^2+2k(|F(X_j)|^2+|F(X_i)|^2-2\langle F(X_i),X_j\rangle_{TM}^2)\right)\\
  			&=&\varpi^{2}\om^{p}\left( |Z|^2+(n-2)\langle X_i,Z\rangle_{TM}^2+2k(k^{-1}|F|^2+(n-2) |F(X_i)|^2-2|F(X_i)|^2   \right).
  		\end{eqnarray*}So
  		\[ |B|^2=\varpi^{2}\om^{p}\left(  (2n-2)|Z|^2+2(2n-4)|F|^2    \right).\qedhere \]
  		
  	\end{enumerate}
  	
  \end{proof}
  
  Note that when $c>0$, $AO(M,2/c)$ has a vanishing principal curvature and one can deduce from Corollary \ref{main2} and Corollary \ref{main3} the following result.
  \begin{theo} If $(M,\prsm)$ has constant sectional curvature $c>0$ then:
  	\begin{enumerate}
  		\item $(AO(M,2/c),h_{0,0})$ is locally symmetric with a non-negative sectional curvature and constant scalar curvature $n(n-1)c$.
  		\item $(AO(M,2/c),h_{2,0})$ is locally symmetric with a non-negative sectional curvature and constant scalar curvature $n(n-1)c+4r(r-1)$. Moreover, if $c=\frac{4(r-1)}{n-1}$ then $(AO(M,2/c),h_{2,0})$ is Einstein with the Einstein constant $4(r-1)$.
  	\end{enumerate}
  	
  \end{theo}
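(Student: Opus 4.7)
The key observation is that the hypothesis $k=2/c$ makes the factor $\varpi=\tfrac14 c(2-ck)$ vanish, so by Proposition \ref{pr3} the principal curvature of $AO(M,2/c)$ is identically zero. This triggers every rigidity result from Section \ref{section2}. My plan is to simply feed $R^{\na^A}=0$ together with the geometric data of a space form into Corollaries \ref{main2}, \ref{main3}, Theorem \ref{main1}, and Remark \ref{rem1}.

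First I would note that a Riemannian manifold of constant sectional curvature $c$ is Einstein with Einstein constant $(n-1)c$, has constant scalar curvature $s^M=n(n-1)c$, and is locally symmetric. Combining $R^{\na^A}=0$ with local symmetry of $M$ and $(p,q)\in\{(0,0),(2,0)\}$, Corollary \ref{main3} immediately gives local symmetry of $(AO(M,2/c),h_{p,q})$ in both cases. The scalar curvature statements then follow from Theorem \ref{main1}: for $(p,q)=(0,0)$ one gets $s^E=s^M\circ\pi_A=n(n-1)c$, and for $(p,q)=(2,0)$ one gets $s^E=n(n-1)c+4r(r-1)$.

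Next I would justify the non-negativity of the sectional curvature. By Remark \ref{rem1}, $R^{\na^A}=0$ forces $(AO(M,2/c),h_{p,q})$ to be, at least locally, the Riemannian product $M\times F$ where $F$ is a fiber equipped with the restriction of $h_{p,q}$. The sectional curvature of such a product is non-negative iff both factors have non-negative sectional curvature. The base $M$ has constant sectional curvature $c>0$, so that is fine. For the fiber, Lemma \ref{le1} together with the formula in Proposition \ref{f}$(i)$ shows that the restriction of $h_{0,0}$ is flat, while the restriction of $h_{2,0}$ has constant (positive) sectional curvature; substituting $p=2$, $q=0$ into $F$ and $G$ of Proposition \ref{f}$(i)$ gives $F\equiv 0$ and $G=4\om^2$, hence $K^v\equiv 4>0$. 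In both cases the product therefore has non-negative sectional curvature.

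Finally, for the Einstein statement in part (2), I would apply Corollary \ref{main2}$(2)$: since $R^{\na^A}=0$ already holds, $(AO(M,2/c),h_{2,0})$ is Einstein with Einstein constant $\lambda=4(r-1)$ iff $(M,\prsm)$ is Einstein with that same constant. But a space form of curvature $c$ has Einstein constant $(n-1)c$, so this is equivalent to $c=\frac{4(r-1)}{n-1}$, which is exactly the condition stated. I do not anticipate any serious obstacle: once one notices that $k=2/c$ kills $\varpi$, the whole theorem is an almost mechanical specialization of the general rigidity machinery already in place, the only mildly delicate point being to record that the stereographic-metric fiber has positive sectional curvature, which is a one-line check from Proposition \ref{f}$(i)$.
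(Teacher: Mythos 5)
Your proposal is correct and follows the same route the paper intends: the choice $k=2/c$ annihilates $\varpi$, hence the principal curvature vanishes by Proposition \ref{pr3}, and the conclusions are read off from the rigidity results of Section \ref{section2} (Theorem \ref{main1}, Corollaries \ref{main2} and \ref{main3}, Remark \ref{rem1}), with the fiber curvature check $F\equiv 0$, $G=4\om^2$ from Proposition \ref{f}$(i)$ supplying the non-negativity. The paper states exactly this derivation (in less detail), so there is nothing further to compare.
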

  
 We end this work by giving a proof of Theorem \ref{main5}.
  \begin{proof} The scalar curvature of $h_{1,1}$ on $AO(M,k)$ is given by
  	\[ s^A=s^M+s^v-|B|^2, \]where $s^M$ is the scalar curvature of $M$, $s^v$ the scalar curvature of the fiber and $B$ the O'Neill shape tensor. By using Proposition \ref{f} and \ref{pr3}, we get
  	\[ s^A=n(n-1)c+\frac{(r-1)}{(1+t)^3}\left( (r-2)t^3+4(r-2)t^2+6(r-1)t+3r\right)-
  	\frac{2\varpi^{2}}{1+t}\left(  (n-1)|Z|^2+2(n-2)|F|^2    \right), t=|Z|^2+|F|^2, \]where $r=\frac{n(n+1)}2$ is the rank of the vector bundle $AO(M,k)$.
  	Or
  	\[ \frac{(r-1)}{(1+t)^3}\left( (r-2)t^3+4(r-2)t^2+6(r-1)t+3r\right)=\frac{r-1}{\al^2}(6+(r-2)(\al^2+\al+1)),\al=1+t. \]
  	So
  	\[ s^A= n(n-1)c+\frac{r-1}{\al^2}(6+(r-2)(\al^2+\al+1))
  	-
  	\frac{2\varpi^{2}}{\al}\left(  (n-1)|Z|^2+2(n-2)|F|^2    \right).\]
  	So if $c=0$ then $s^A>0$. So we suppose $c\not=0$.
  	
  	If $n=2$ then $r=3$ and by writing $|Z|^2=\al-1-|F|^2$ we get
  	\[ \al^2s^A=2(c+1-\varpi^{2})\al^2+2(1+\varpi^{2})\al+14+2\varpi^{2}\al|F|^2. \]
  	If $n\geq3$ then by writing $|F|^2=\al-1-|Z|^2$ we get
  	\[ \al^2s^A=(n(n-1)c+(r-1)(r-2)-4\varpi^{2}(n-2))\al^2+((r-1)(r-2)+4\varpi^{2}(n-2))\al+(r-1)(r+4)+
  	2(n-3)\varpi^{2}\al|Z|^2. \]
  	{\bf The case $n=2$.}
  	In this case $s^A>0$ if and only if $c+1-\varpi^{2}\geq0$. Since $\varpi=\frac14c(2-ck)$, this is equivalent to
  	\[ k^2c^4-4kc^3+4(c^2-4c-4)\leq 0. \]
  	This  is equivalent to  $\De=4c^6-4(c^6-4c^5-4c^4)=16c^4(1+c)\geq0$ and
  	\[ \frac{2(c-2\sqrt{1+c})}{c^2}\leq k\leq \frac{2(c+2\sqrt{1+c})}{c^2}. \]
  	Since $k>0$ then we must have $c+2\sqrt{1+c}>0$. If $c>0$ this it is true. If $c<0$ then $c+2\sqrt{1+c}>0$ iff
  	\[ c^2-4c-4<0. \]This equivalent to $c>2(1-\sqrt{2})$.
  	
  	{\bf The case $n\geq3$.}
  	In this case $s^A>0$ if and only if $(n(n-1)c+(r-1)(r-2)-4\varpi^{2}(n-2)\geq0$. Since $\varpi=\frac14c(2-ck)$, and if we put $a=n(n-1)$, $b=(r-1)(r-2)$ and $d=4(n-2)$ this is equivalent to
  	\[ k^2dc^4-4kdc^3+4(dc^2-4ac-4b)\leq 0. \]
  	This  is equivalent to  $\De=4d^2c^6-4(d^2c^6-4dac^5-4dbc^4)=16c^4d(b+ac)\geq0$ and
  	\[ \frac{2(cd-2\sqrt{d}\sqrt{b+ac})}{dc^2}\leq k\leq \frac{2(cd+2\sqrt{d}\sqrt{b+ac})}{dc^2}. \]
  	Since $k>0$ then $cd+2\sqrt{d}\sqrt{b+ac}>0$. if $c>0$ then it is true. If $c<0$ this is equivalent to 
  	\[ c^2d-4ac-4b<0. \]This is equivalent to
  	\[ c>\frac{2(a-\sqrt{a^2+bd})}{d}\esp c>-\frac{b}{a}. \]
  	Or $-\frac{b}{a}<\frac{2(a-\sqrt{a^2+bd})}{d}$ so  $s^A>0$ if and only if
  	\[ c>\frac{2(a-\sqrt{a^2+bd})}{d}\esp 0<k\leq \frac{2(cd+2\sqrt{d}\sqrt{b+ac})}{dc^2}. \qedhere\]
  \end{proof}


\begin{thebibliography}{99}
 	\bibitem{abassi1} M. T. K. Abassi and S. Maati, On Riemannian $g$-natural metrics of the form $ag^s + bg^h + cg^v$ on the tangent bundle of a Riemannian manifold $(M, g)$, Mediterr. J. Math. 2 (2005), 19-43.
 	\bibitem{abassi2} M. T. K. Abassi and S. Maati, On some hereditary properties of Riemannian $g$-natural metrics on tangent bundles of Riemannian manifolds, Diff. Geom. Appl. 22 (2005), 19-47.
 	
 	
 	
 	\bibitem{abassi3}	M.T.K. Abbassi and M. Sarih: On natural metrics on tangent bundles of Riemannian manifolds, Arch. Math.(Brno), 41 (2005), 71-92.
 	
 	\bibitem{abassi4} M.T.K. Abbassi, $g$-natural metrics: new horizons
in the geometry of tangent bundles of Riemannian manifolds, Note Mat. 1(2008), suppl. n. 1, 6-35.

 	
 	\bibitem{benyounes} Mich\`ele Benyounes, Eric Loubeau and Chris M. Wood, The geometry of generalized Cheeger-Gromoll metrics, Tokyo J. of Math. Vol. 32, No. 2, 2009.
 	
 	
 	
 	\bibitem{benyounes2} Mich\`ele Benyounes, Eric Loubeau and Chris M. Wood,   Harmonic sections of Riemannian vector bundles, and metrics of Cheeger-Gromoll type, Differential Geometry and its Applications 25 (2007) 322-334.

 	
 	\bibitem{bes} \textsc{ A. Besse,} Einstein manifolds, Springer-Verlag,
 	Berlin-Hiedelberg-New York (1987).
 	
 	\bibitem{boucetta} Mohamed Boucetta, Riemannian Geometry of Lie algebroids, Journal of the Egyptian Mathematical Society, Volume {19}, {1}, pp. 57-70 (2011).
 	
 	\bibitem{cheeger} J. Cheeger and D. Gromoll, On the structure of complete manifolds of nonnegative curvature, Ann. of Math. {\bf 96} (1972), 413-443.
 	
 	\bibitem{fernandes} { M. Crainic and  R. Fernandes,  }
 	{ Integrability of Lie brackets,} Ann. of Math. (2) 157 (2003),
 	no. 2, 575--620.
 	
 	
 	\bibitem{gud}	S.  Gudmundsson and E. Kappos, On the geometry of the tangent bundle with the Cheeger-Gromoll metric,
 	Tokyo J. Math. 25 (2002), 75-83.
 	
 	\bibitem{sekizawa1} O. Kowalski and M. Sekizawa, Natural transformations of Riemannian metrics on manifolds to metrics on tangent bundles -a classification-, Bull. Tokyo Gakugei Univ. (4) 40 (1988), 1-29.
 	
 	\bibitem{kubarski} Jan Kubarski, Lie algebroid of a Principal Fibre Bundle, Publications du d\'epartement de math\'ematique de Lyon, 1989, fascicule A1, 1-66.
 	
 	\bibitem{Li} Libermann P., Sur les prolongements des fibr\'es principaux et groupo\"ides diff\'erentiables, S\'eminaire Analyse Globale Montr\'eal (1969), 7-108

 	
 	\bibitem{musso} E. Musso and F. Tricerri: Riemannian metrics on tangent bundles, Ann. Math. Pura Appl. (4) 150 (1988), 1-20.
 	
 	\bibitem{one} \textsc{ B. O'Neill,} { The fundamental equations of a
 		submersion}, Mich. Math. J. {\bf 13}, 459-469 (1966).
 	
 	\bibitem{sasaki}	S. Sasaki, On the differential geometry of tangent bundles of Riemannian manifolds, Tohoku Math. J. 10 (1958), 338-354.
 	
 	\bibitem{Sekizawa} M. Sekizawa, Curvatures of Tangent Bundles with Cheeger-Gromoll Metric, Tokyo J. Math. 14 (1991), 407-417.
 	
 	\bibitem{wood1} C.M. Wood, On the energy of a unit vector field, Geom. Dedicata 64 (1997) 319-330.
 	
 	\bibitem{wood2} C.M. Wood, Harmonic sections and equivariant harmonic maps, Manuscripta Math. 94 (1997) 1-13.
 \end{thebibliography}
 \end{document}